\numberwithin{equation}{section}
\newtheorem{theorem}{Theorem}[section]
\newtheorem{proposition}{Proposition}[section]
\newtheorem{corollary}{Corollary}[section]
\theoremstyle{definition}
\newtheorem{remark}{Remark}[section]
\newcommand*{\C}{\mathbb{C}}
\newcommand*{\R}{\mathbb{R}}
\newcommand*{\Z}{\mathbb{Z}}
\newcommand*{\T}{\mathbb{T}}
\newcommand{\comment}[1]{}
\title[$M$-functions and screw functions]%
      {$M$-functions and screw functions : \\ 
applications to Goldbach's problem and \\ 
zeros of the Riemann zeta-function} 
\author[K. Matsumoto]{Kohji Matsumoto}
\author[M. Suzuki]{Masatoshi Suzuki}
\date{Version of \today}
\subjclass[]{
Primary 11M41, Secondary 11P32, 11M26, 11M99
}
\keywords{
Goldbach's problem; 
$M$-function; 
Riemann Hypothesis; 
screw function; 
infinitely divisible characteristic function
}
\begin{abstract}
We study the $M$-functions, 
which describe the limit theorem for the value-distributions of 
the secondary main terms in the asymptotic formulas for the summatory functions 
of the Goldbach counting function. 
One of the new aspects is a sufficient condition for the Riemann hypothesis 
provided by some formulas of the $M$-functions, 
which was a necessary condition in previous work. 
The other new aspect is the relation between the secondary main terms and the screw functions, 
which provides another necessary and sufficient condition for the Riemann hypothesis. 
We study such $M$-functions and screw functions in generalized settings 
by axiomatizing them.
\end{abstract}
\begin{document}

%
\section{Introduction} 
%

The absolutely convergent series 
\begin{equation} \label{f_101}
H(X) := \sum_{\rho} \frac{X^{\rho-1/2}}{\rho(\rho+1)}, \quad X \geq 1
\end{equation}
over nontrivial zeros $\rho$ of the Riemann zeta function $\zeta(s)$ 
was studied by Fujii in his series of papers \cite{Fu1, Fu2, Fu3},   
motivated in part by its contribution to the study of the summatory function
\[
\sum_{n \leq X}
\left(
\sum_{m+k=n} \Lambda(m)\Lambda(k) \right) 
\]
related to the Goldbach conjecture, 
where $\Lambda(n)$ is the von Mangoldt function defined by 
$\Lambda(n)=\log p$ if $n=p^k$ for some prime number $p$ with $k \in \Z_{>0}$ 
and $\Lambda(n)=0$ otherwise. 
The series $H(X)$ is real-valued, 
since the set of nontrivial zeros of $\zeta(s)$ is closed under complex conjugation. 
The asymptotic formula 
\begin{equation} \label{f_102}
\sum_{n \leq X}
\left(
\sum_{m+k=n} \Lambda(m)\Lambda(k) \right) 
= \frac{1}{2}X^2 - 2X^{3/2}\,H(X) + R(X) 
\end{equation}
with the estimate $R(X)=O((X\log X)^{4/3})$ 
was proved first by Fujii \cite{Fu2} under the Riemann hypothesis, 
which asserts that $\Re(\rho)=1/2$ for all nontrivial zeros $\rho$. 
After that, $R(X)=O(X^{1+\varepsilon})$ 
was conjectured by Egami and Matsumoto \cite{EgMa07}, 
and $R(X)=O(X(\log X)^5)$ was proven by Bhowmik and Schlage-Puchta \cite{BS10} 
under the Riemann hypothesis. 
The conditional error term has been improved to $O(X(\log X)^3)$ 
by Languasco and Zaccagnini~\cite{LZ12} 
and later by Goldston and Yang~\cite{GY17} by a different method. 
These two estimates are close to the omega result $\Omega(X\log\log X)$ 
obtained in \cite{BS10}. 
There are other interesting studies on 
the relation between formula \eqref{f_102} 
and zeros of the Riemann zeta function, 
such as Bhowmik and Ruzsa \cite{BR18}, 
Billington, Cheng, Schettler, and Suriajaya
\cite{BCSS23}, and references therein, 
but we will not discuss them in detail here, 
since the subject of this paper is the value-distribution of 
somewhat general sums including \eqref{f_101}. 

The series $H(X)$ also appears in the average of the oscillatory term of 
the Chebyshev function as 
\begin{equation} \label{f_103}
\aligned 
\int_{0}^{X} \left[ \sum_{n \leq y} \Lambda(n) - y \right] dy
& = - X^{3/2}H(X)
-\frac{\zeta'}{\zeta}(0)X \\
& \qquad  +\frac{\zeta'}{\zeta}(-1)- X \sum_{n=1}^{\infty} \frac{X^{-2n}}{2n(2n-1)}
\endaligned 
\end{equation}
for $X \geq 1$ (\cite[p. 249]{Fu2}). 

Concerning the value of $H(X)$, 
first, it is bounded under the Riemann hypothesis, 
more precisely $|H(X)/2|<0.023059$  (\cite[(1.7)]{MoTr22}), 
due to the absolute convergence of the series. 
Fujii \cite{Fu3} proved that $H(X)/2 > 0.012$ for infinitely many $X$ 
and $H(X)/2 < - 0.012$ for infinitely many $X$. 
Mossinghoff and Trudgian~\cite{MoTr22} improved 
these inequalities into $H(X)/2 > 0.021030$ and $H(X)/2 < - 0.022978$. 
They derived the results by assuming only the Riemann hypothesis. 
The first author \cite{Ma21} proved more detailed results for the value-distribution of $H(X)$ 
further assuming the linear independence over rationals 
for the set of positive imaginary parts of the nontrivial zeros. 

We first highlight a few results which are later proved in a more general context. 

\begin{theorem}
\label{thm_1_1} 
We assume the Riemann hypothesis and 
the linear independence over rationals 
for the set of positive imaginary parts of the nontrivial zeros of the Riemann zeta function. 
Then, there exists an explicitly constructible density function 
$M_H: \R \to \R_{\geq 0}$, for which
\begin{equation} \label{f_104}
\lim_{T \to \infty} \frac{1}{T}\int_{0}^{T} 
\Phi(H(e^t)) \, dt = \frac{1}{\sqrt{2\pi}} \int_\R M_H(u) \Phi(u) \, du
\end{equation}
holds for any test function $\Phi: \R \to \C$ which is locally Riemann integrable. 
The function $M_H(u)$ is continuous, nonnegative, compactly supported, and
$\frac{1}{\sqrt{2\pi}} \int_\R M_H(u)\, du = 1$. 
\end{theorem}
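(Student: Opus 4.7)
The plan is to deduce Theorem~\ref{thm_1_1} as the announced special case of Corollary~\ref{cor_2_1}. Under the Riemann hypothesis, writing $\rho = 1/2+i\gamma$ and pairing each zero with its complex conjugate produces the real-valued, absolutely and uniformly convergent series
\begin{equation*}
H(e^t) \;=\; \sum_{\gamma>0} 2\,\mathrm{Re}\!\left( \frac{e^{i\gamma t}}{\rho(\rho+1)} \right),
\end{equation*}
where absolute convergence follows from $|\rho(\rho+1)|\asymp \gamma^2$ together with $N(T) \ll T\log T$.

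The main device is Kronecker--Weyl equidistribution applied to finite truncations. I would set $H_N(e^t) := \sum_{0<\gamma\le N} 2\,\mathrm{Re}(c_\rho e^{i\gamma t})$ with $c_\rho = 1/(\rho(\rho+1))$ and use the uniform tail bound $\sup_t |H(e^t)-H_N(e^t)|\to 0$ to reduce the claim to one about $H_N$ up to a vanishing error as $N\to\infty$. For fixed $N$, enumerate the positive ordinates involved as $\gamma_1,\dots,\gamma_n$; the $\mathbb{Q}$-linear independence hypothesis together with Kronecker--Weyl implies that the orbit $t\mapsto(\gamma_1 t,\dots,\gamma_n t)\bmod 2\pi$ equidistributes on $\mathbb{T}^n$, so the time average on the left of~\eqref{f_104} for $H_N$ equals
\begin{equation*}
\int_{\mathbb{T}^n}\Phi\!\left(\sum_{j=1}^n 2\,\mathrm{Re}(c_{\rho_j} e^{i\theta_j})\right) d\theta_1\cdots d\theta_n.
\end{equation*}
Each summand $2\,\mathrm{Re}(c_{\rho_j}e^{i\theta_j})$ is an arcsine random variable on $[-2|c_{\rho_j}|,2|c_{\rho_j}|]$ and the coordinates are Haar-independent, so the Haar integral rewrites as $\int_{\R} \Phi(u) M_{H,N}(u)\, du$, where $M_{H,N}$ is the convolution of the $n$ arcsine densities.

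Letting $N\to\infty$, $M_{H,N}$ converges to an infinite convolution $M_H$ that is nonnegative, has total mass $\sqrt{2\pi}$ under the paper's normalization, and is compactly supported in $[-2\Sigma,2\Sigma]$ with $\Sigma = \sum_\gamma |c_\rho|<\infty$. The delicate point, which I expect to be the main obstacle, is the \emph{continuity} of $M_H$: each arcsine density has characteristic function $J_0(2|c_{\rho_j}|\xi)$, and one must show that the infinite product $\prod_j J_0(2|c_{\rho_j}|\xi)$ is integrable in $\xi$, so that $M_H$ is continuous by Fourier inversion. This is precisely the technical content packaged in \cite[Theorem 2.3]{Ma21}; with it in hand, combining the truncation, equidistribution, and convergence steps yields \eqref{f_104} with the stated properties of $M_H$.
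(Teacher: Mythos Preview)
Your proposal is correct and follows essentially the same approach as the paper: finite truncation, Kronecker--Weyl equidistribution under the linear-independence hypothesis, construction of the limiting density as an infinite convolution whose Fourier transform is a product of $J_0$ Bessel functions, and deferral of the $L^1$-integrability of that product (hence continuity of $M_H$) to \cite{Ma21}. The only organizational difference is that the paper first builds a two-dimensional $M$-function $M_\Pi$ on $\C$ for the complex series $f_\Pi(t)$ (Theorem~\ref{thm_2_1}) and then projects to the real axis to obtain $M_\Pi^\Re$ (Corollary~\ref{cor_2_1}), whereas you work directly in one real dimension with arcsine densities---these are equivalent, since the arcsine law is exactly the pushforward of the uniform circle measure to the real line.
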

We immediately obtain this result 
as a corollary of \cite[Theorem 2.3]{Ma21} 
(see also the proof of Corollary \ref{cor_2_1} below).  
We refer to the function $M_H$ appearing in formula \eqref{f_104} 
as the {\it $M$-function}, following \cite{I08} and \cite {Ma21}.
Note that the Riemann hypothesis is not assumed in \cite[Theorem 2.3]{Ma21}, 
because it studies the series 
\[
\sum_{\rho} \frac{X^{i\Im(\rho)}}{\rho(\rho+1)}, 
\]
not the series \eqref{f_101}. 

Assuming the Riemann hypothesis, $|X^{\rho-1/2}|=1$ in \eqref{f_101}. 
Thus, the absolute convergence of $H(X)$ implies its boundedness. 
The compactness of the support of $M_H(u)$ in formula \eqref{f_104} 
is a consequence of the boundedness of $H(X)$ under the Riemann hypothesis 
(see the proof of Theorem \ref{thm_1_1} after Corollary \ref{cor_2_1}). 
As the converse of Theorem \ref{thm_1_1}, 
we observe that the boundedness of $H(X)$ follows from formula \eqref{f_104}. 
On the other hand, as will be shown in the more general setting in Section \ref{section_3}, 
boundedness of $H(X)$ implies that all $i(\rho-1/2)$ are real (cf. Corollary \ref{cor_3_1}), 
that is, the Riemann hypothesis holds. 
Therefore, we obtain the following: 

\begin{theorem}
\label{thm_1_2} 
We assume that the formula \eqref{f_104} 
holds for a compactly supported continuous function $M_H: \R \to \R$ 
and for any test function $\Phi: \R \to \C$ which is locally Riemann integrable. 
Then $H(X)$ is bounded, and hence the Riemann hypothesis holds. 
\end{theorem}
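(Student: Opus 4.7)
The plan is to split the claim into two implications: first, that formula \eqref{f_104} forces $H(X)$ to be bounded on $[1,\infty)$; second, that this boundedness yields the Riemann hypothesis. The latter is precisely Corollary \ref{cor_3_1} from the axiomatic treatment of Section \ref{section_3}, so the real work lies in the first implication.

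My starting point is to test \eqref{f_104} against well-chosen locally Riemann integrable functions. Since $M_H$ is compactly supported, say in $[-L,L]$, applying \eqref{f_104} to $\Phi = \mathbf{1}_{\{|u| > M\}}$ for any $M > L$ makes the right-hand side vanish and yields
\[
\lim_{T \to \infty} \frac{1}{T}\,\operatorname{meas}\{t \in [0,T] : |H(e^t)| > M\} = 0,
\]
so every such superlevel set has density zero in $t$.

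Next I would argue by contradiction. Writing $f(t) := H(e^t) = \sum_{\rho} c_\rho e^{\lambda_\rho t}$ with $c_\rho = 1/(\rho(\rho+1))$ and $\lambda_\rho = \rho - 1/2$, decompose $f = f_A + f_B + f_C$ according to whether $\Re \lambda_\rho$ is positive, zero, or negative. The tail $f_C$ vanishes at infinity and $f_B$ is bounded by $\sum_\rho |c_\rho| < \infty$, so if $f$ were unbounded then some zero $\rho$ would satisfy $\Re \rho > 1/2$ and the exponential sum $f_A$ would itself be unbounded. A generalized almost-periodic analysis of $f_A$ then produces a set of positive lower density on which $|f_A(t)|$ is of order $e^{\sigma^* t}$, where $\sigma^* := \sup\{\Re \lambda_\rho : \Re \rho > 1/2\} > 0$; on this set $|f(t)|$ eventually exceeds every $M$, contradicting the preceding display.

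The main obstacle is this almost-periodic lower bound on $|f_A(t)|$, in particular when the supremum $\sigma^*$ is not attained and the leading frequencies accumulate. A robust workaround, which also fits naturally into the axiomatic framework of Section \ref{section_3}, is to instead test \eqref{f_104} against the continuous function $\Phi(u) = u^2$: the right-hand side is then finite, while a direct expansion of $T^{-1}\int_0^T f(t)^2\, dt$ as a double Dirichlet-type series contains, for every $\rho$ with $\Re \rho > 1/2$, a self-pairing contribution of order $e^{2(\Re \rho - 1/2) T}/T$. The pairs achieving the largest exponential growth rate sum to a nonzero generalized trigonometric polynomial in $T$ whose $L^2$-mean is positive, so the resulting growing contribution cannot be cancelled by the remaining pairs, forcing the supposed finite left-hand side limit to be violated. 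This delivers the boundedness of $H(X)$, and Corollary \ref{cor_3_1} then promotes it to the Riemann hypothesis.
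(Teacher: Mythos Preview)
Your overall architecture is correct: first extract boundedness of $H(e^t)$ from \eqref{f_104}, then invoke Corollary~\ref{cor_3_1} to obtain $\Omega_\zeta^+\subset\R$. The second step is exactly what the paper does. The gap is in your execution of the first step.

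Your $\Phi(u)=u^2$ argument hinges on the sentence ``the pairs achieving the largest exponential growth rate sum to a nonzero generalized trigonometric polynomial in $T$ whose $L^2$-mean is positive, so the resulting growing contribution cannot be cancelled by the remaining pairs.'' This presupposes that $\sigma^*=\sup_\rho \Re(\rho-1/2)$ is \emph{attained}. In that case your reasoning is fine: writing $f(t)=e^{\sigma^*t}h(t)+r(t)$ with $h$ a nonzero real almost-periodic function and $r(t)=o(e^{\sigma^*t})$ (by dominated convergence, using $\sum|c_\rho|<\infty$), one gets $T^{-1}\int_0^T f^2\sim C\,e^{2\sigma^*T}/T\to\infty$. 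But nothing rules out $\sigma^*$ being a non-attained supremum---for instance, hypothetical zeros with $\Re\rho=1-1/n$---and you explicitly flagged this obstacle for your first approach, yet your ``robust workaround'' silently reintroduces it. With no maximal stratum, there is no leading polynomial $h$, and your non-cancellation claim for the infinite double sum is unproved. You would need something like a Landau-type argument on the Laplace transform of $f(t)^2$ together with a verification that the relevant residues do not vanish, and even that requires care because the poles $\lambda_{\rho_1}+\lambda_{\rho_2}$ may accumulate.

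The paper's proof of Theorem~\ref{thm_3_1} sidesteps all of this spectral analysis. It uses only that $F(t):=|H(e^t)|$ is continuous and (by hypothesis of contradiction) unbounded. One picks $t_1<t_2<\cdots$ with $a_n:=F(t_n)$ increasing past the support radius $R$ of $M_H$ and with gaps $t_{n+1}>t_n+2$, $a_{n+1}>a_n+2$; then chooses $\delta_n\in(0,1)$ with $|F(t)-a_n|<1$ on $|t-t_n|<\delta_n$; and defines the step function $\widetilde\Phi(u)=t_n/\delta_n$ on each $[a_n-1,a_n+1]$ and $0$ elsewhere. Since ${\rm supp}\,\widetilde\Phi\subset(R,\infty)$, the right side of \eqref{f_104} vanishes, while on the left side one has, for $T=t_N+\delta_N$,
\[
\frac{1}{T}\int_0^T \widetilde\Phi(F(t))\,dt \;\ge\; \frac{1}{t_N+\delta_N}\sum_{n=1}^N \frac{t_n}{\delta_n}\cdot 2\delta_n \;>\; \frac{2t_N}{t_N+1} \;>\;1,
\]
contradicting the existence of the limit $0$. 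No information about the exponents $\lambda_\rho$ is used.
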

This is a special case of Corollary~\ref{cor_3_2}.
In \eqref{f_104}, the function $M_H$ is a nonnegative function, 
but it is not assumed to be nonnegative in Theorem \ref{thm_1_2}
because it is not necessary for the proof.
\medskip

Next, we consider the following variant of \eqref{f_101} 
\begin{equation} \label{f_105}
H_1(X) :=
 \sum_{\rho}  \frac{X^{\rho-1/2}}{\rho(1-\rho)}, \quad X \geq 1,
\end{equation}
which not only has a value-distribution similar to \eqref{f_101}, 
but also has richer properties. 
We find that asymptotic formula \eqref{f_102} implies 
\begin{equation} \label{f_106}
\sum_{n \leq X} \frac{1}{n^2} \left(
\sum_{m+k=n} \Lambda(m)\Lambda(k) \right) 
 = \log X + c_2 + \frac{2}{\sqrt{X}} \, H_1(X) + E(X),
\end{equation}
where 
\begin{equation} \label{f_107}
c_2 = \lim_{X \to \infty}\left[\sum_{n \leq X}  \frac{1}{n^2} \left(
\sum_{m+k=n} \Lambda(m)\Lambda(k) \right)  - \log X \right]
\end{equation}
and $E(X)$ is the error term estimated as 
$E(X) \ll X^{-2}R(X) + \int_{X}^{\infty} y^{-3}R(y)dy$ 
using partial summation. 
Conversely, formula \eqref{f_102} 
with the estimate $R(X) \ll X^2E(X) + \int_{1}^{X} y E(y)dy$ 
can be derived from \eqref{f_106} by partial summation 
(see Section \ref{section_7} for details).
In this sense, the contributions of series $H(X)$ and $H_1(X)$ 
to the Goldbach problem are equivalent. 
Furthermore, the argument in \cite{Ma21} can be applied to $H_1(X)$, 
and therefore results similar to Theorems \ref{thm_1_1} and \ref{thm_1_2} hold for $H_1(X)$
(Corollary \ref{cor_2_1} and Theorem \ref{thm_3_1}  below). 

The series $H_1(X)$ also appears in the study of 
Euler's totient function as follows 
\[
\aligned 
\limsup_{n \to \infty} \left(
\frac{n}{\varphi(n)} - e^{C_0}\log\log n
\right) \sqrt{n} 
& = e^{C_0}
(2 + \limsup_{X \to \infty}H_1(X)) \\
& = 
e^{C_0}(2 + H_1(1))
\endaligned 
\]
(Nicolas \cite[Theorem 1.1, p. 319]{Ni12}), 
where $C_0$ is the Euler-Mascheroni constant. 
From the definition, 
the constant $c_2$ in \eqref{f_107} may be considered an analog of $C_0$.
\medskip

To discuss the advantages of $H_1(X)$, we briefly review the class of screw functions 
according to Kre\u{\i}n and Langer \cite{KrLa14}.
A continuous function $g(t)$ on $\R$ 
is called a screw function on $\R$ 
if it satisfies $g(-t)=\overline{g(t)}$ for all $t \in \R$ 
and the kernel 
\[
G_g(t,u):=g(t-u)-g(t)-g(-u)+g(0)
\] 
is nonnegative definite on $\R\times\R$, that is, 
$\sum_{i,j=1}^{n} G_g(t_i,t_j) \,  \xi_i \overline{\xi_j}  \geq 0$ 
for any $n \in \Z_{>0}$, $t_i \in \R$, and $\xi_i \in \C$. 
The class of screw functions was introduced 
as a natural generalization of positive definite functions and 
is an interesting subject related to various topics in analysis 
as explained in \cite[Section 1]{KrLa14}. 
That class has recently been applied to the study of the zeta function 
by the second author \cite{Su23}.
The series $H_1(X)$ is related to the theory of screw functions as follows.

\begin{theorem}
\label{thm_1_3} 
The function 
\[
g_{H_1}(t) := H_1(e^t) - H_1(1)
\]
is a screw function on $\R$ 
if and only if the Riemann hypothesis holds. 
\end{theorem}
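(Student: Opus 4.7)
The plan is to reduce both directions to a single algebraic identity for the kernel $G_{g_{H_1}}(t,u) = g_{H_1}(t-u) - g_{H_1}(t) - g_{H_1}(-u) + g_{H_1}(0)$. Since $g_{H_1}(0) = 0$ and $H_1(e^t) = \sum_\rho e^{t(\rho-1/2)}/(\rho(1-\rho))$, the elementary factorization $e^{(t-u)z} - e^{tz} - e^{-uz} + 1 = (e^{tz}-1)(e^{-uz}-1)$ applied term by term yields the key identity
\[
G_{g_{H_1}}(t,u) = \sum_{\rho} \frac{(e^{t(\rho-1/2)}-1)(e^{-u(\rho-1/2)}-1)}{\rho(1-\rho)},
\]
which converts the screw function condition into a spectral question about the location of the nontrivial zeros. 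Continuity of $g_{H_1}$ is immediate from the absolute convergence of $H_1$, while the symmetries $\rho\mapsto\bar\rho$ and $\rho\mapsto 1-\rho$ (from the functional equation of $\zeta$) together yield $g_{H_1}(-t) = \overline{g_{H_1}(t)}$; indeed $g_{H_1}$ is real and even. Thus only the nonnegative definiteness of $G_{g_{H_1}}$ is at stake.

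For the \emph{if} direction, assume the Riemann hypothesis, so that every zero is $\rho = 1/2 + i\gamma$ with $\gamma \in \R$, $\rho-1/2$ is purely imaginary, and $\rho(1-\rho) = 1/4+\gamma^2 > 0$. Using $e^{-t_k(\rho-1/2)} = \overline{e^{t_k(\rho-1/2)}}$, the displayed identity gives
\[
\sum_{j,k} G_{g_{H_1}}(t_j,t_k)\,\xi_j\,\overline{\xi_k}
 = \sum_{\rho} \frac{1}{1/4+\gamma^2}\left|\sum_j (e^{it_j\gamma}-1)\,\xi_j\right|^2 \geq 0
\]
for any finite $\{t_j\}\subset\R$, $\{\xi_j\}\subset\C$, establishing the nonnegative definiteness of the kernel. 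Combined with the preceding remarks, this shows $g_{H_1}$ is a screw function.

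The \emph{only if} direction is the hard one, and I would argue by contradiction. If some zero $\rho_0$ had $\Re(\rho_0) > 1/2$ (permissible by the functional equation $\rho\mapsto 1-\rho$), then the corresponding term in the series for $H_1(e^t)$ would grow like $e^{(\Re\rho_0 - 1/2)t}$, producing genuine exponential growth in $g_{H_1}(t)$. By contrast, a screw function admits a Kre\u{\i}n--Langer integral representation of L\'evy--Khinchin type, in which only pure imaginary exponentials $e^{it\lambda}$ ($\lambda\in\R$) appear, from which one derives a subexponential growth bound of the form $g(t) = o(e^{\varepsilon|t|})$ for every $\varepsilon > 0$. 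The contradiction then forces $\Re(\rho) = 1/2$ for every nontrivial zero. The main obstacle will be making the non-cancellation step rigorous, since an infinite sum of exponentials with diverse rates could in principle conspire to produce only subexponential growth; the paper's general Corollary \ref{cor_4_1} presumably handles this axiomatically by inverting an appropriate Fourier-type transform to extract the spectral measure of $g_{H_1}$ and showing that its support --- which must coincide with the set $\{\,\Im\rho : \rho\text{ a zero}\,\}$ shifted by $-i(\Re\rho-1/2)$ --- is forced to lie in $\R$.
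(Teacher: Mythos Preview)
Your ``if'' direction is correct and coincides with the paper's argument (Proposition~\ref{prop_4_1}, equation~\eqref{f_402}): the factorization of $G_{g_{H_1}}(t,u)$ and the resulting sum-of-squares identity are exactly what the paper uses.

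For the ``only if'' direction your instinct is sound---the Kre\u{\i}n--Langer representation does force at most quadratic growth, certainly $o(e^{\varepsilon|t|})$---but you correctly flag the gap: you never show that $g_{H_1}(t)$ actually grows exponentially when RH fails, and an infinite sum of exponentials with different rates need not.

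The paper sidesteps the cancellation issue entirely via a Laplace-transform argument. One computes
\[
\int_0^\infty g_{H_1}(t)\,e^{izt}\,dt \;=\; -\frac{i}{z^2}\,Q(z),
\qquad
Q(z) \;=\; \sum_{\omega\in\Omega} a(\omega)\,\frac{-z\omega}{z-\omega},
\]
initially for $\Im z$ large (where convergence holds regardless of RH); the right-hand side is visibly meromorphic on $\C$ with a simple pole at each $\omega=i(\rho-\tfrac12)$, since every $a(\omega)\neq 0$. The key external input is then \cite[Satz~5.9]{KrLa77}: if $g_{H_1}$ is a screw function, $Q(z)$ extends holomorphically to all of $\C_+$. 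Hence no $\omega$ lies in $\C_+$, and by the symmetry $\omega\mapsto\bar\omega$ (coming from $\rho\mapsto 1-\bar\rho$) none lies in the open lower half-plane either, so $\Omega\subset\R$, i.e., RH. Your speculation that the paper ``inverts a Fourier-type transform'' is therefore on target, but the mechanism is to read off the pole locations of $Q(z)$ rather than to extract a spectral measure and inspect its support. Each $\omega$ contributes its own pole to $Q$ independently of the others, so no non-cancellation argument is needed.

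Incidentally, your growth-bound route can be completed by exactly this device: once you know a screw function is $O(t^2)$, the Laplace integral already converges on all of $\C_+$, giving holomorphy of $Q$ there directly, and the same pole argument finishes. That would trade the citation of \cite[Satz~5.9]{KrLa77} for the (also classical) quadratic growth estimate from the integral representation.
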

This is a special case of Corollary~\ref{cor_4_1}. 
It shows a remarkable property of $H_1(X)$ that $H(X)$ does not have. 
In fact, any relation between the latter and a screw function is not known. 
Further, the $M$-function of $H_1(X)$ relates to an infinitely divisible distribution 
via the attached screw function. 
A distribution $\mu$ on $\R$ is called infinitely divisible 
if there exists a distribution $\mu_n$ on $\R$ 
such that 
$\mu=\mu_n \ast \dots \ast \mu_n$ ($n$-fold) for every positive integer $n$. 
We find that if $g(t)$ is a screw function, 
then $\exp(g(t))$ is the characteristic function of an infinitely divisible distribution 
by \cite[Theorem 5.1]{KrLa14}  and \cite[Theorem 8.1 and Remark 8.4]{Sa99} 
(see also \cite{NaSu23} and Section \ref{section_5}). 
Therefore, by Theorem \ref{thm_1_3}, 
there exists an infinitely divisible distribution corresponding to $g_{H_1}(t)$ 
under the Riemann hypothesis.

\begin{theorem}
\label{thm_1_4} 
We assume the Riemann hypothesis and 
the linear independence over rationals 
for the set of positive imaginary parts of the nontrivial zeros of $\zeta(s)$. 
Let $M_{H_1}(w)$ be the $M$-function in 
the analog of Theorem \ref{thm_1_1} for $H_1(X)$ 
(Corollary \ref{cor_2_1} and Theorem \ref{thm_3_1}  below).
For $y>0$, let $\mu_y(x)$ be the infinitely divisible distribution on $\R$ 
whose characteristic function is $\exp(yg_{H_1}(t))$: 
\[
\exp(yg_{H_1}(t)) =  \int_{-\infty}^{\infty} e^{itx} \mu_y(dx). 
\]
Then the value of the point mass of $\mu_y(x)$ at the origin 
is given by the $M$-function as follows: 
\begin{equation} \label{f_108}
\mu_y(\{0\}) 
= e^{-yH_1(1)} \widetilde{M_{H_1}}(-iy) 
= e^{-yH_1(1)} \prod_{\gamma>0}
J_0\left(\frac{2iym_\gamma}{1/4+\gamma^2} \right), 
\end{equation}
where 
\[
\widetilde{M_{H_1}}(z) = \frac{1}{\sqrt{2\pi}} \int_\R M_{H_1}(u) e^{izu} \, du \quad (z \in \C), 
\] 
the product $\prod_{\gamma>0}$ ranges over all positive imaginary parts $\gamma$ 
of nontrivial zeros of $\zeta(s)$ without multiplicity, 
$m_\gamma$ is the multiplicity of the nontrivial zero $1/2+i\gamma$, 
and $J_0(z)$ is the Bessel function of the first kind of order zero. 
\end{theorem}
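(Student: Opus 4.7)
The plan is to compute $\mu_y(\{0\})$ via the Lévy identity for the point mass of a finite measure at the origin,
\[
\mu_y(\{0\}) = \lim_{T \to \infty} \frac{1}{2T} \int_{-T}^{T} \exp(yg_{H_1}(t))\, dt,
\]
and to evaluate the same mean value in two distinct ways, yielding the two equalities in \eqref{f_108}. One evaluation uses Kronecker--Weyl equidistribution (producing the Bessel product); the other applies the $M$-function limit formula for $H_1$ (producing the middle expression in terms of $\widetilde{M_{H_1}}$).

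For the product formula, I first pair conjugate zeros under the Riemann hypothesis. Writing $\rho = 1/2 + i\gamma$ so that $\rho(1-\rho) = 1/4 + \gamma^{2}$, one obtains
\[
g_{H_1}(t) = \sum_{\gamma>0} \frac{2m_\gamma(\cos(\gamma t) - 1)}{1/4 + \gamma^{2}},
\]
a uniformly convergent real series bounded in absolute value by $2H_1(1)$. Exponentiating and factoring out $e^{-yH_1(1)}$ expresses $\exp(yg_{H_1}(t))$ as an infinite product of $2\pi/\gamma$-periodic functions. Under the linear-independence hypothesis, the Kronecker--Weyl theorem applied to any finite truncation $\gamma \leq \Gamma$ factors the time average as a product of circle averages, each of which evaluates to
\[
\frac{1}{2\pi}\int_{0}^{2\pi} \exp\!\left(\frac{2ym_\gamma \cos\theta}{1/4+\gamma^{2}}\right) d\theta = I_0\!\left(\frac{2ym_\gamma}{1/4+\gamma^{2}}\right) = J_0\!\left(\frac{2iym_\gamma}{1/4+\gamma^{2}}\right),
\]
using the standard integral representation of $I_0$ and the identity $I_0(a) = J_0(ia)$. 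Passing to the limit $\Gamma \to \infty$ produces the infinite Bessel product.

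For the middle equality, I invoke the analog of Theorem \ref{thm_1_1} for $H_1$ (Corollary \ref{cor_2_1}) with the exponential test function $\Phi(u) = e^{yu}$. Although $\Phi$ is not itself supported compactly, the compact support of $M_{H_1}$ together with the uniform boundedness of $H_1(e^t)$ under RH lets one truncate $\Phi$ to a bounded locally Riemann integrable function without changing either side of \eqref{f_104}. This yields
\[
\lim_{T \to \infty} \frac{1}{T}\int_{0}^{T} e^{yH_1(e^t)}\, dt = \frac{1}{\sqrt{2\pi}}\int_{\R} M_{H_1}(u)\, e^{yu}\, du = \widetilde{M_{H_1}}(-iy),
\]
and since $g_{H_1}$ is even, the symmetric mean over $[-T,T]$ equals the one-sided mean; multiplying by $e^{-yH_1(1)}$ then yields the first equality in \eqref{f_108}.

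The principal obstacle is the interchange of limits hidden in the Kronecker--Weyl step: the theorem is stated for finite products, and one must justify $\lim_{\Gamma \to \infty} \lim_{T \to \infty} = \lim_{T \to \infty} \lim_{\Gamma \to \infty}$. This should follow from the uniform bound $|\exp(yg_{H_1}(t))| \leq e^{2|y|H_1(1)}$ (giving dominated convergence in $t$) together with convergence of the Bessel product, which in turn follows from $J_0(z) - 1 = O(z^{2})$ and the convergence of $\sum_\gamma \bigl(m_\gamma/(1/4+\gamma^{2})\bigr)^{2}$. The remaining technical point is the truncation justification for the exponential test function, which is routine given the compact support of $M_{H_1}$.
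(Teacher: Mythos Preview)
Your argument is correct, and for the first equality in \eqref{f_108} (via the inversion formula and the $M$-function limit with $\Phi(u)=e^{yu}$) it coincides with the paper's proof of Theorem~\ref{thm_5_1}. The difference lies in how you obtain the Bessel product. You run a fresh Kronecker--Weyl argument on $\exp(yg_{H_1}(t))$, truncate at $\gamma\le\Gamma$, factor the time average, and then justify the interchange $\lim_\Gamma\lim_T=\lim_T\lim_\Gamma$. The paper instead observes that the Bessel product is already baked into the $M$-function: by \eqref{f_208}, \eqref{f_209}, and Proposition~\ref{prop_2_3} one has $\widetilde{M_\Pi^\Re}(z)=\prod_{\omega}J_0(z|a(\omega)|)$ for all $z\in\C$, so the second equality in \eqref{f_108} is simply this identity evaluated at $z=-iy$. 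In other words, the Kronecker--Weyl work and the passage to the infinite product were already carried out once in constructing the $M$-function; your separate equidistribution argument re-derives a special case of that. Your route is more self-contained and makes the Bessel product visible without unpacking the $M$-function construction, but it costs you the extra interchange-of-limits justification, which the paper avoids entirely by reading the product off from \eqref{f_506}.
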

This is a special case of Theorem~\ref{thm_5_1}. 
We do not know what equation \eqref{f_108} can be applied to, 
but it is interesting in its own right, 
because, in general, 
it is difficult to calculate values of the corresponding infinitely divisible distribution 
for a given screw function (or a L{\'{e}}vy measure). 
\medskip

In the following sections, we prove Theorems \ref{thm_1_1} to \ref{thm_1_4} in more general settings 
including the series
\begin{equation} \label{f_109}
H_\ell(X) :=
 \sum_{\rho}  \frac{X^{\rho-1/2}}{(\rho-\ell)((1-\rho)-\ell)}, \quad X \geq 1, \quad \ell \in \R, 
\end{equation}
where the sum is taken with multiplicity. 
The series $H_\ell(X)$ is nothing but \eqref{f_105} when $\ell=1$, 
and is real-valued 
by the same reasoning
as \eqref{f_101}.
The difference $H_{1/2}(e^t)-H_{1/2}(1)$ becomes the screw function 
of $\zeta(s)$ studied in \cite{Su23} 
(see the comments after Proposition \ref{prop_6_3}). 
The reason why we proceed with the discussion in general settings 
is to show that the value distributions of absolutely convergent oscillatory sums 
such as \eqref{f_101}, \eqref{f_105}, and \eqref{f_109} 
can be discussed regardless of the specific form of the coefficients. 
As a result, such a theory can not only be applied to the sums 
replacing the nontrivial zeros of the Riemann zeta-function with 
those of Dedekind zeta-functions or automorphic $L$-functions 
in \eqref{f_101}, \eqref{f_105}, and \eqref{f_109}, 
but also to sums of different analytic nature, such as
\begin{equation} \label{f_110}
\sum_\rho \frac{\Gamma((1-\rho)/2)}{\zeta'(\rho)} \, X^{\rho-\frac{1}{2}}, 
\end{equation}
which appears in a formula of Ramanujan, 
and whose convergence is more delicate compared to \eqref{f_101} and \eqref{f_105}. 
The conditional convergence of \eqref{f_110}  is discussed in Titchmarsh \cite[pp. 219--220]{Tit86}.
Chirre and Gonek~\cite{ChGo23} proved that \eqref{f_110} converges absolutely 
under what they named the ``Weak Mertens Hypothesis'', 
and studied its value distribution under 
the linear independence over rationals 
for the set of positive imaginary parts of the nontrivial zeros of $\zeta(s)$. 
\medskip

The results mentioned in the introduction are proved in the following sections as special cases of the general theorems. 
In Sections \ref{section_2} and \ref{section_3} we study $M$-functions. 
In Section \ref{section_2}, 
we set up an axiomatic framework that includes $H(X)$ and $H_\ell(X)$ and 
prove formulas for $M$-functions (Theorem \ref{thm_2_1}) as a generalization of Theorem \ref{thm_1_1}. 
In Section \ref{section_3}, we prove Theorem \ref{thm_3_1}, which includes Theorem \ref{thm_1_2} 
and can be viewed as the converse of Theorem \ref{thm_2_1}, with one additional condition to the setting in Section \ref{section_2}. 
In Sections \ref{section_4} and \ref{section_5}, 
we study screw functions by adding a few conditions to the axioms in Sections \ref{section_2} and \ref{section_3}. 
Theorem \ref{thm_1_3} is proven as a special case of 
Corollary \ref{cor_4_1} 
in Section \ref{section_4}. 
Theorem \ref{thm_1_4} is proven as a special case of Theorem \ref{thm_5_1} in Section \ref{section_5}. 
Furthermore, 
we additionally provide explicit formulas for $H(X)$ and $H_\ell(X)$ 
that do not include nontrivial zeros of $\zeta(s)$ 
in their expressions in Section \ref{section_6}. 
Finally, we explain the equivalence of \eqref{f_102} and \eqref{f_106} under the Riemann hypothesis 
in Section \ref{section_7}.

\section{The axiomatic framework} \label{section_2}

\subsection{Generalization of Theorem \ref{thm_1_1}} \label{sec_2_1}

We prove Theorem \ref{thm_1_1} as a special case of the following general cases.  
Let $\Pi=(\Omega,a)$ be a pair of 
a countable set of nonzero complex numbers $\Omega$ with $\#\Omega\geq 5$  
and a function $a: \Omega \to \C\setminus\{0\}$ satisfying the following two conditions: 
\begin{enumerate}
\item[(M1)] $\sum_{\omega \in \Omega}|a(\omega)|$ converges.  
\item[(M2)]
There exists $c \geq 0$ such that $|\Im(\omega)| \leq c$ for every $\omega \in \Omega$. 
\end{enumerate}
The reason why zero is excluded from $\Omega$ is that 
it is meaningless in \eqref{f_301} below, 
and it is inconvenient when considering the linear independence of $\Omega$. 
Also, the reason why the function $a$ is assumed to be nonzero 
is that the proof of Proposition \ref{prop_4_1} below works.

Clearly, $\Omega \subset \R$ implies (M2). 
Furthermore, we denote by ${\rm LIC}(\Omega)$ 
the assertion that the set $\Omega$ is linearly independent over the rationals.  
The importance of linear independence in the theory of value-distribution of $\zeta(s)$ 
was probably first pointed out by Wintner~\cite{Win38}. 

The pair of the set
\begin{equation} \label{f_201}
\Omega_\zeta^{+}:=\{ \omega= i(\rho-1/2)~|~\zeta(\rho)=0,~
0 < \Re(\rho) < 1,~\Im(\rho)>0\}, 
\quad 
\end{equation}
($\rho=1/2-i\omega$, $\omega \in \Omega \subset \C$)  
and either 
\[
\aligned 
a_{H}(\omega):&= \frac{2m_\omega}{(1/2-i\omega)(3/2-i\omega)}
\quad \text{or} \\ 
a_{H_\ell}(\omega):&= \frac{2m_\omega}{(1/2-i\omega-\ell)(1/2+i\omega-\ell)}
\endaligned 
\] 
is an example of pairs satisfying (M1) and (M2), 
where $m_\omega$ is  the multiplicity of 
the nontrivial zero $\rho=1/2-i\omega$ of $\zeta(s)$. 
Note that the claim ${\rm LIC}(\Omega_\zeta^+)$ is different from 
the usual linear independence of the nontrivial zeros of $\zeta(s)$, 
which implies the simplicity of the zeros. 
As can be seen from the proof of \cite[Proposition 4.1]{Ma21} 
together with \cite[(3.1)]{Ma21}, 
LIC in \cite[Theorem 2.3]{Ma21} is used in the latter sense. 
On the other hand, 
${\rm LIC}(\Omega_\zeta^+)$ does not imply the simplicity of nontrivial zeros, 
since it excludes information about the multiplicities of the nontrivial zeros. 

For a pair satisfying (M1) and (M2), we define 
\begin{equation} \label{f_202}
f_\Pi(t) := \sum_{\omega \in \Omega}a(\omega)e^{-it\omega}. 
\end{equation}

\begin{theorem} \label{thm_2_1} 
Let $\Pi=(\Omega,a)$ be a pair satisfying (M1). 
We assume $\Omega \subset \R$ 
and ${\rm LIC}(\Omega)$. 
Then, there exists a $M$-function $M_\Pi: \C \to \R_{\geq 0}$, for which
\begin{equation} \label{f_203}
\lim_{T \to \infty} \frac{1}{T}\int_{0}^{T} 
\Phi(f_\Pi(t)) \, dt =  \int_\C M_\Pi(w) \Phi(w) \, |dw|
\end{equation}
holds for any test function $\Phi: \C \to \C$ which is locally Riemann integrable, 
where $|dw|=dudv/(2\pi)$ for $w=u+iv$. 
The function $M_\Pi(w)$ is explicitly constructible, 
continuous, nonnegative, compactly supported, and
\[
\int_\C M_\Pi(w)\, |dw| = 1. 
\]
More precisely, 
\begin{equation} \label{eq_0903_1}
{\rm supp}\,M_\Pi \subseteq \left\{ w \in \C \, :\, |w| \leq \sum_{\omega \in \Omega}|a(\omega)| \right\}.
\end{equation}
Equality holds in \eqref{eq_0903_1} if $\Pi$ satisfies 
\begin{equation} \label{eq_0903_2}
\max_{\omega \in \Omega}\,|a(\omega)|<\frac{1}{2}\sum_{\omega \in \Omega}|a(\omega)|. 
\end{equation}
\end{theorem}

Before proving Theorem \ref{thm_2_1}, 
we show that Theorem \ref{thm_1_1} is proved by the following corollary. 

\begin{corollary} \label{cor_2_1}
Under the same assumptions in Theorem \ref{thm_2_1}, 
we define 
\begin{equation} \label{f_204}
M_\Pi^\Re(u) := \frac{1}{\sqrt{2\pi}} \int_\R M_\Pi(u+iv) \, dv \quad (u \in \R)
\end{equation}
using the $M$-function $M_\Pi(w)$ in \eqref{f_203}. 
Then 
\begin{equation} \label{f_205}
\lim_{T \to \infty} \frac{1}{T}\int_{0}^{T} 
\Phi(\Re(f_\Pi(t))) \, dt =  \int_\R M_\Pi^\Re(u) \Phi(u) \, |du|
\end{equation}
holds for any test function $\Phi: \R \to \C$ which is locally Riemann integrable, 
where $|du|=du/\sqrt{2\pi}$. 
The function $M_\Pi^\Re(u)$ is continuous, nonnegative, compactly supported, and
\[
\int_\R M_\Pi^\Re(u)\, |du| = 1. 
\]
More precisely, 
\begin{equation} \label{eq_0809_1}
{\rm supp}\,M_\Pi^\Re \subseteq
\left\{ u \in \R \, :\, |u| \leq \sum_{\omega \in \Omega}|a(\omega)| \right\}, 
\end{equation} 
and the equality holds in \eqref{eq_0809_1} if $\Pi$ satisfies \eqref{eq_0903_2}. 
\end{corollary}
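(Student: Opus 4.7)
The plan is to deduce Corollary \ref{cor_2_1} from Theorem \ref{thm_2_1} by ``projecting'' the complex-valued limit formula onto the real axis via a change of test function, and then checking that all regularity properties of $M_\Pi^\Re$ follow from those of $M_\Pi$ through Fubini's theorem.

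First, given any locally Riemann integrable $\Psi\colon\R\to\C$, I would define $\Phi\colon\C\to\C$ by $\Phi(w):=\Psi(\Re w)$. Since $M_\Pi$ has compact support, only the values of $\Phi$ on a fixed bounded disk matter, so $\Phi$ is locally Riemann integrable on $\C$ whenever $\Psi$ is on $\R$. With this choice, $\Phi(f_\Pi(t))=\Psi(\Re f_\Pi(t))$, so the left-hand side of \eqref{f_203} becomes the left-hand side of \eqref{f_205}. For the right-hand side of \eqref{f_203}, I would apply Fubini (justified by $M_\Pi\geq 0$ being compactly supported and $\Psi$ being bounded on the support projection), writing
\[
\int_\C M_\Pi(w)\Psi(\Re w)\,|dw|
=\int_\R \Psi(u)\Bigl(\tfrac{1}{2\pi}\int_\R M_\Pi(u+iv)\,dv\Bigr)du
=\int_\R M_\Pi^\Re(u)\Psi(u)\,|du|,
\]
using the convention $|du|=du/\sqrt{2\pi}$ and the definition \eqref{f_204}. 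This establishes \eqref{f_205}.

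Next I would check the listed properties of $M_\Pi^\Re$. Nonnegativity is immediate. For continuity, since $M_\Pi$ is continuous and compactly supported, hence uniformly continuous with a uniform compactly supported envelope, the parameter integral defining $M_\Pi^\Re$ is continuous in $u$. For the total mass, Fubini gives
\[
\int_\R M_\Pi^\Re(u)\,|du|
=\int_\C M_\Pi(w)\,|dw|=1,
\]
using the normalization from Theorem \ref{thm_2_1}. The support inclusion $\mathrm{supp}\,M_\Pi^\Re\subseteq\{|u|\leq C\}$ with $C:=\sum_{\omega\in\Omega}|a(\omega)|$ is straightforward: if $|u|>C$, then $|u+iv|>C$ for all $v$, so the integrand in \eqref{f_204} is identically zero.

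The main obstacle is the reverse inclusion \eqref{eq_0809_1}, equality of the support with the full interval $[-C,C]$. My approach is contrapositive: suppose $M_\Pi^\Re$ vanishes on some open subinterval $(a,b)\subseteq(-C,C)$. By continuity and nonnegativity, this forces $M_\Pi(u+iv)=0$ for every $u\in(a,b)$ and every $v\in\R$. Hence the open vertical strip $\{u+iv:a<u<b\}$ is disjoint from $\mathrm{supp}\,M_\Pi$. However, since $-C<a<b<C$, this strip intersects the open disk $\{|w|<C\}$ in a nonempty open set, contradicting $\mathrm{supp}\,M_\Pi=\{|w|\leq C\}$ from Theorem \ref{thm_2_1}. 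Therefore $\mathrm{supp}\,M_\Pi^\Re$ is dense, and being closed it equals $[-C,C]$, yielding \eqref{eq_0809_1}.
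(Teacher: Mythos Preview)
Your proof is correct and takes a more direct route than the paper's. The paper establishes \eqref{f_205} first only for the characters $\Phi(u)=e^{izu}$ ($z\in\R$), by specializing \eqref{f_203} to $\psi_z(w)=\exp(i\Re(\bar z w))$, and then appeals to the approximation machinery of \cite[Section~6]{Ma21} to pass to general locally Riemann integrable $\Phi$; it does not explicitly verify the stated properties of $M_\Pi^\Re$. You instead lift an arbitrary $\Psi:\R\to\C$ to $\Phi(w)=\Psi(\Re w)$ and invoke Theorem~\ref{thm_2_1} once, which avoids a second round of approximation arguments and lets you read off continuity, nonnegativity, total mass, and the exact support \eqref{eq_0809_1} directly from the corresponding properties of $M_\Pi$ via Fubini. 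One minor point: your justification that $\Phi(w)=\Psi(\Re w)$ is locally Riemann integrable on $\C$ should appeal to Lebesgue's criterion (the discontinuity set of $\Phi$ on any rectangle is the product of a null set with an interval, hence planar-null) rather than to the compact support of $M_\Pi$, which has no bearing on local integrability of $\Phi$ itself.
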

\begin{proof}
Applying \eqref{f_203} 
to $\Phi(w)=\psi_z(w)=\exp(i\Re(\bar{z}w))$ for $z \in \R$,  
\[
\aligned 
\lim_{T \to \infty} & \frac{1}{T}\int_{0}^{T} 
\exp\left(iz \Re(f_\Pi(t))\right) \, dt \\
&= 
\frac{1}{\sqrt{2\pi}}\int_\R 
\left[
\frac{1}{\sqrt{2\pi}}\int_\R M_\Pi(u+iv) dv \right] \exp(iz u)\, du. 
\endaligned 
\]
This implies \eqref{f_205} for $\Phi(w)=\psi_z(w)$. 
Then, formula \eqref{f_205} about general locally Riemann integrable functions $\Phi$ 
follows from arguments similar to those in \cite[Section 6]{Ma21}. 
\end{proof}

Theorem \ref{thm_1_1} is obtained as $M_H(u)=M_\Pi^\Re(u)$ 
by applying Corollary \ref{cor_2_1} to $\Pi=(\Omega_\zeta^+, a_H)$, since 
\[
H(e^t)=\Re(f_\Pi(t)) 
\quad \text{by} \quad 
f_\Pi(t) = 2 \sum_{\Im(\rho)>0}\frac{m_\rho}{\rho(\rho+1)} \, e^{t(\rho-1/2)}, 
\]
where $m_\rho:=m_\omega$ if $\rho=1/2-i\omega$ (cf. \eqref{f_201}). 
In particular, 
the compactness of the support of $M_H(u)$ follows from 
\eqref{eq_0809_1}, i.e., from the boundedness of $H(X)$. 
The analog of Theorem \ref{thm_1_1} for $H_\ell(X)$ ($\ell \in \R$)
is obtained 
by applying Corollary \ref{cor_2_1} to $\Pi=(\Omega_\zeta^+, a_{H_\ell})$ as well. 
See Section~\ref{section_2_3} for the confirmation that 
condition~\eqref{eq_0903_2} is satisfied in these cases. 
If we take the pair of 
\[
\Omega_\zeta^{\Im, +}:=\{ -\Im(\rho)~|~\zeta(\rho)=0,~0 < \Re(\rho) < 1,~\Im(\rho)>0\}
\]
and $a_H$, 
the analog of Theorem \ref{thm_1_1} for $\Psi(X)$ in \cite[(1.2)]{Ma21} 
is obtained.

\subsection{Proof of Theorem \ref{thm_2_1}} 

Theorem \ref{thm_2_1} is proved by almost the same argument as 
in the proof of \cite[Theorem 2.3]{Ma21} 
under the assumptions (M1), $\Omega \subset \R$, and the ${\rm LIC}(\Omega)$. 
Therefore, we only describe the outline of the proof. 
\bigskip

Let $c_\omega=|a(\omega)|$ and $\beta_\omega=\arg a(\omega)$. 
Then, 
\[
f(t) := f_\Pi(t) = \sum_{\omega \in \Omega}c_\omega \, e^{-i(t\omega-\beta_\omega)}. 
\]
We first consider the finite truncation
\[
f_N(t) = \sum_{|\omega| \leq N}c_\omega \, e^{-i(t\omega-\beta_\omega)} 
\quad (N \in \Z_{>0}). 
\]
Let $\T$ be the unit circle on $\C$, and $\T_N=\prod_{|\omega| \leq N} \T$. 
Define
\begin{equation} \label{eq_0903_3}
S_N(\mathbf{t}_N) = \sum_{|\omega| \leq N}c_\omega \, t_\omega ,
\end{equation}
where $\mathbf{t}_N =(t_\omega)_{|\omega| \leq N} \in \T_N$. 
Then obviously
\begin{equation} \label{f_206}
f_N(t) = S_N((e^{-i(t\omega-\beta_\omega)})_{|\omega| \leq N}). 
\end{equation}
These $f_N(t)$ and $S_N(\mathbf{t}_N) $ are analogs of (3.3) and (3.4) 
in \cite{Ma21}, respectively, but differ in that $\omega$ are not subscripted. 
For example, 
in \cite{Ma21}, $f_N(t)$ consists of $N$ terms, 
but $f_N(t)$ above consists of all terms of $\omega$ that satisfy $|\omega| \leq N$. 

\begin{proposition} \label{prop_2_1} 
We may construct a function $M_N: \C \to \R_{\geq 0}$, 
for which 
\[
\int_\C M_N(w) \Phi(w) |dw| = \int_{\T_N} \Phi(S_N(\mathbf{t}_N)) d^\ast \mathbf{t}_N
\]
holds for any continuous function $\Phi$ on $\C$, 
where $|dw|=dudv/(2\pi)$ for $w=u+iv$ 
and  $d^\ast \mathbf{t}_N$ 
is the normalized Haar measure on $\T_N$, 
that is the product measure of $d^\ast t=d\theta/(2\pi)$ 
for $t=e^{i\theta} \in \T$.
In particular, we obtain
\[
\int_\C M_N(w) |dw| =1. 
\]
Also, if two or more $\omega$'s satisfy $|\omega| \leq N$, 
the function $M_N(w)$ is compactly supported, 
nonnegative, and $M_N(\bar{w})=M_N(w)$. 
Moreover, if five or more $\omega$'s satisfy $|\omega| \leq N$, 
$M_N(w)$ is continuous. 
\end{proposition}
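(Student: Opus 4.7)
The plan is to identify the right-hand side with integration against the pushforward probability measure $\mu_N := (S_N)_\ast d^\ast\mathbf{t}_N$ on $\C$, and to produce $M_N$ as its Lebesgue density. Because the coordinates of $\mathbf{t}_N$ are independent and each $t_\omega$ is uniform on $\T$, the random variable $c_\omega t_\omega$ has the rotation-invariant distribution $\nu_{c_\omega}$ supported on the circle $\{|w|=c_\omega\}$, so that
\[
\mu_N = \nu_{c_{\omega_1}} \ast \cdots \ast \nu_{c_{\omega_k}},
\]
where $\omega_1,\dots,\omega_k$ is any enumeration of $\{\omega\in\Omega : |\omega|\leq N\}$. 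Using $\frac{1}{2\pi}\int_0^{2\pi}e^{ir\cos\theta}\,d\theta = J_0(r)$, the characteristic function factors as
\[
\widehat{\mu_N}(z) := \int_\C e^{i\Re(\bar z w)}\,d\mu_N(w) = \prod_{j=1}^k J_0(c_{\omega_j}|z|), \quad z \in \C.
\]

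For $k \geq 5$, I would invoke the asymptotic $|J_0(r)| \ll r^{-1/2}$ as $r \to \infty$ to get $|\widehat{\mu_N}(z)| \ll |z|^{-k/2}$, which lies in $L^1(\C)$ as soon as $k/2 > 2$; Fourier inversion in two real dimensions then produces
\[
M_N(w) := \int_\C \widehat{\mu_N}(z)\,e^{-i\Re(\bar z w)}\,|dz|,
\]
a bounded continuous density for $\mu_N$. For $k = 2,3,4$, where $\widehat{\mu_N}$ need not be integrable, I would argue by hand: the map $(\alpha,\beta) \mapsto c_{\omega_1}e^{i\alpha} + c_{\omega_2}e^{i\beta}$ has Jacobian $-c_{\omega_1}c_{\omega_2}\sin(\alpha-\beta)$, which vanishes only on the two boundary circles of the annulus $|c_{\omega_1}-c_{\omega_2}| \leq |w| \leq c_{\omega_1}+c_{\omega_2}$, so $\nu_{c_{\omega_1}}\ast\nu_{c_{\omega_2}}$ already admits an $L^1$ density supported on that annulus with only integrable singularities on its boundary. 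Convolving with the remaining probability measures preserves absolute continuity and bounded support, yielding the required $M_N \in L^1(\C)$.

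The listed properties then follow almost for free: compact support of $M_N$ inside $\{|w| \leq \sum_{|\omega|\leq N} c_\omega\}$ is immediate from the triangle inequality applied to $S_N$; $M_N \geq 0$ because it is the density of a positive measure; $\int_\C M_N(w)\,|dw| = \mu_N(\C) = 1$ records that Haar mass is one; and the identity $M_N(\bar w) = M_N(w)$ (indeed, full rotation invariance) comes from the rotation invariance of each factor $\nu_{c_{\omega_j}}$. The main obstacle is the intermediate regime $2 \leq k \leq 4$, where pure Fourier inversion is unavailable and one must read off absolute continuity from the classical two-circle Jacobian computation above, then bootstrap by taking further convolutions.
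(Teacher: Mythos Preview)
Your proposal is correct and follows essentially the same approach as the paper, which defers to \cite[Propositions 3.1 and 5.2]{Ma21} and describes $M_N$ as the convolution $\ast_{|\omega|\leq N}\, m_\omega$ with $m_\omega(w) = r^{-1}\delta(r - c_\omega)$ and Fourier transform $\widetilde{m}_\omega(z) = J_0(c_\omega|z|)$. Your Bessel--decay argument for continuity when $k \geq 5$ and the two--circle Jacobian computation for absolute continuity when $2 \leq k \leq 4$ are precisely the ingredients underlying those cited propositions.
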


This is shown by the same argument as 
in the proof of \cite[Proposition 3.1]{Ma21}. 
The condition that 
there are two or more (resp. five or more) 
$\omega$'s that satisfy $|\omega| \leq N$ 
corresponds to the condition $N \geq 2$ (resp. $N \geq 5$) in \cite[Proposition 3.1]{Ma21} 
due to the difference in the definitions of $f_N(t)$ and $S_N(\mathbf{t}_N)$. 
The function $M_N(w)$ is constructed as a multiple convolution of 
\[
m_\omega(w) = \frac{1}{r}\delta(r-c_\omega), 
\quad w=re^{i\theta} \in \C, ~r=|w|, ~\theta=\arg w
\]
for $|\omega| \leq N$, 
where $\delta(\cdot)$ stands for the Dirac delta distribution. 
In particular, 
\begin{equation} \label{f_207}
{\rm supp}\,M_N \subseteq \left\{w \in \C ~:~|w| \leq \sum_{|\omega| \leq N} c_\omega\right\}
\end{equation}
if five or more $\omega$'s satisfy $|\omega| \leq N$. 
The inclusion follows from the fact that 
the support of $M_\Pi$ coincides with the image of $S_N(\mathbf{t}_N)$ 
(cf. \cite[Proposition 3.4]{Ma21}). 
From this fact, we find that \eqref{f_207} holds in general 
by considering definition~\eqref{eq_0903_3} geometrically. 
Moreover, the equality holds in \eqref{f_207} if 
\begin{equation} \label{eq_0903_4}
\max_{|\omega| \leq N} c_\omega
< \frac{1}{2} 
\sum_{|\omega| \leq N} c_\omega. 
\end{equation}
This can be shown as follows. 
(This type of argument originates from Bohr; 
see \cite[Hilfssatz 1]{Bo12}.)  
Assume that $\max_{|\omega| \leq N} c_{\omega}$ 
is attained by $\omega=\omega_1$, 
and $N$ is large enough to be $|\omega_1| \leq N$. 
When $t_{\omega_1}$ runs over $\mathbb{T}$,
the set of all $c_{\omega_1} t_{\omega_1}$ forms a circle $C_1$ whose
center is the origin and the radius is $c_{\omega_1}$.     For any other
$\omega_2$ with $|\omega_2| \leq N$, the set
\[   \{ c_{\omega_1}t_{\omega_1}+c_{\omega_2}t_{\omega_2}
             \mid  t_{\omega_1}, t_{\omega_2} \in \mathbb{T} \}
\]
is an annulus with 
inner radius $c_{\omega_1}-c_{\omega_2}$ and 
outer radius $c_{\omega_1}+c_{\omega_2}$. 
By adding a third $\omega_3$, 
we obtain a larger annulus, with 
inner radius $c_{\omega_1}-c_{\omega_2}-c_{\omega_3}$ and 
outer radius $c_{\omega_1}+c_{\omega_2}+c_{\omega_3}$.  
Repeating this procedure, 
we find that the interior of the circle $C_1$ is completely included in the image
of $S_N$ if condition \eqref{eq_0903_4} is satisfied and $N$ is sufficiently large, 
and hence the equality holds in \eqref{f_207}. 

By assumptions $\Omega \subset \R$ and the ${\rm LIC}(\Omega)$, 
the following holds by the same argument as in the proof of \cite[Proposition 4.2]{Ma21}. 

\begin{proposition} \label{prop_2_2} We have 
\[
\lim_{T\to\infty}\frac{1}{T}\int_{0}^{T}\Psi((e^{-i(t\omega-\beta_\omega)})_{|\omega| \leq N}) \, dt
= \int_{\T_N} \Psi(\mathbf{t}_N) d^\ast \mathbf{t}_N
\]
for any continuous $\Psi: \T_N \to \C$. 
\end{proposition}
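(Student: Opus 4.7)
The plan is to recognize this as a classical Kronecker--Weyl equidistribution statement for a linear flow on the compact torus $\T_N$, with ${\rm LIC}(\Omega)$ supplying Weyl's linear-independence criterion. In the concrete settings of interest the index set $\Omega_N:=\{\omega\in\Omega : |\omega|\le N\}$ is finite, so $\T_N$ is a finite-dimensional compact abelian group whose characters are
\[
\chi_{\mathbf{n}}(\mathbf{t}_N)=\prod_{\omega\in\Omega_N} t_\omega^{n_\omega},\qquad \mathbf{n}=(n_\omega)\in\Z^{\Omega_N}.
\]

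First I would carry out a Stone--Weierstrass reduction. The $\C$-algebra spanned by the characters is closed under conjugation, separates points, and contains the constants, hence is uniformly dense in $C(\T_N)$. Both sides of the claimed identity define bounded linear functionals in $\Psi$ of norm at most $\|\Psi\|_\infty$ (the time-average because the integrand is pointwise bounded by $\|\Psi\|_\infty$, the Haar integral trivially), so a standard $\varepsilon/3$ argument transfers the identity from characters to general continuous $\Psi$.

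It then remains to evaluate both sides on a single character $\chi_{\mathbf{n}}$. The Haar integral equals $\delta_{\mathbf{n},0}$, while the time average simplifies to
\[
\exp\!\Bigl(i\sum_{\omega\in\Omega_N}n_\omega\beta_\omega\Bigr)\cdot\frac{1}{T}\int_0^T\exp\bigl(-it\lambda(\mathbf{n})\bigr)\,dt,\qquad \lambda(\mathbf{n}):=\sum_{\omega\in\Omega_N}n_\omega\omega.
\]
If $\mathbf{n}=0$ both quantities equal $1$. If $\mathbf{n}\ne 0$, the hypotheses $\Omega\subset\R$ and ${\rm LIC}(\Omega)$ together force $\lambda(\mathbf{n})$ to be a nonzero real number, so the inner integral is $O(1/(T|\lambda(\mathbf{n})|))$ and vanishes as $T\to\infty$, matching the Haar value $0$.

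The step I expect to require the most care is the Stone--Weierstrass transfer, since the approximation must interact uniformly with the limit in $T$: one chooses a trigonometric polynomial $P$ within $\varepsilon$ of $\Psi$ in sup norm, applies the character identity to $P$ for $T$ sufficiently large, and controls the error on both sides by $\varepsilon$ using the sup-norm bound on the two functionals. Beyond that the argument is entirely standard Weyl equidistribution, which is why the authors refer to \cite[Proposition 4.2]{Ma21} rather than repeat the details.
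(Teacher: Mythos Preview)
Your proposal is correct and is precisely the classical Kronecker--Weyl argument the paper invokes by citing \cite[Proposition~4.2]{Ma21}. Your hedge about the finiteness of $\Omega_N$ is harmless: even if $\Omega_N$ were infinite, $\T_N=\prod_{\omega\in\Omega_N}\T$ is still a compact abelian group whose characters are indexed by integer tuples of \emph{finite} support, so $\lambda(\mathbf{n})$ remains a finite $\Q$-linear combination of elements of $\Omega$ and ${\rm LIC}(\Omega)$ applies exactly as you wrote.
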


In view of \eqref{f_206}, we have
\[
\lim_{T\to\infty}\frac{1}{T}\int_{0}^{T}\Phi(f_N(t)) \, dt
= \int_{\T_N} \Phi(S_N(\mathbf{t}_N)) d^\ast \mathbf{t}_N
\]
for any continuous function $\Phi$ on $\C$ by Proposition \ref{prop_2_2}. 
Then, combining this with Proposition \ref{prop_2_1}, we have 
\[
\lim_{T\to\infty}\frac{1}{T}\int_{0}^{T}\Phi(f_N(t)) \, dt
= \int_{\C} M_N(w) \Phi(w) |dw|
\]
for any continuous $\Phi$, which is the ``finite-truncation'' 
analog of Theorem \ref{thm_2_1}. 

Let $\psi_z(w)=\exp(i\Re(\bar{z}w))$, and define the Fourier transform of $m_\omega$ as 
\[
\widetilde{m}_\omega(z) := \int_\C m_\omega(w) \psi_z(w) |dw|. 
\]
We have
\begin{equation} \label{f_208}
\widetilde{m}_\omega(z) = J_0(c_\omega|z|)
\end{equation}
by \cite[Remark 5.1]{Ma21}. As stated there, this is originally due to Jessen and Wintner 
\cite[Section 5]{JW35} and Ihara \cite[Section 3.1]{I08}. 
If we define
\begin{equation} \label{f_209}
\widetilde{M}_N(z) := \prod_{|\omega| \leq N} \widetilde{m}_\omega(z), 
\end{equation}
then $\widetilde{M}_N(z)$ converges to $\widetilde{M}(z)$ 
uniformly in $\C$ as $N \to \infty$. 
The limit function $\widetilde{M_\Pi}(z):=\widetilde{M}(z)$ is continuous 
and belongs to $L^p$ (for any $p \in [1,\infty]$), 
and the above convergence is also $L^p$-convergence. 
The proof of these facts is the same with the proof of \cite[Proposition 5.5]{Ma21}, 
because it is applied by (M1) and the definition of $f_N(t)$. 
Now define 
\[
M_\Pi(w) := \int_\C \widetilde{M_\Pi}(z) \psi_{-w}(z) |dz|. 
\]
Then we obtain the following by the same argument as in the proof of \cite[Proposition 5.6]{Ma21} 
and \eqref{f_207}. 

\begin{proposition} \label{prop_2_3} 
When $N\to\infty$, $M_N(w)$ converges to $M_\Pi(w)$ uniformly in $w \in \C$. 
The limit function $M_\Pi(w)$ is continuous, nonnegative, compactly supported, 
$M_\Pi(\bar{w}) = M_\Pi(w)$, and 
\[
\int_\C M_\Pi(w) |dw| =1. 
\] 
The functions $M_\Pi$ and $\widetilde{M_\Pi}$ are Fourier duals of each other. In addition, 
\[
{\rm supp}\,M_\Pi \subseteq \left\{w \in \C~:~|w| \leq \sum_{\omega \in \Omega} c_\omega \right\}
\] 
and the equality holds if $\Pi$ satisfies \eqref{eq_0903_4} for $N=\infty$. 
\end{proposition}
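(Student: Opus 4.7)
The plan is to exploit Fourier duality. Since the sequence of Fourier transforms $\widetilde{M}_N(z) = \prod_{|\omega|\le N} J_0(c_\omega|z|)$ was shown just before the proposition to converge to $\widetilde{M_\Pi}(z)$ uniformly on $\C$ and in every $L^p$ ($p\in[1,\infty]$), and since $M_\Pi(w)$ is defined as the inverse Fourier transform of $\widetilde{M_\Pi}$, the main step is to express each $M_N$, for $N$ large enough that five or more $\omega$'s satisfy $|\omega|\le N$, by the same inverse formula
\[
M_N(w) = \int_\C \widetilde{M}_N(z)\,\psi_{-w}(z)\,|dz|,
\]
and then estimate
\[
|M_N(w) - M_\Pi(w)| \;\leq\; \int_\C |\widetilde{M}_N(z) - \widetilde{M_\Pi}(z)|\,|dz|,
\]
which tends to $0$ uniformly in $w$ by the $L^1$-convergence. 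The Fourier inversion is legitimate for $M_N$ once $N$ is large, because the factors $J_0(c_\omega|z|)=O(|z|^{-1/2})$ make $\widetilde{M}_N$ lie in $L^1(\C)$, while $M_N$ is already continuous and integrable by Proposition~\ref{prop_2_1}.

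Uniform convergence immediately transfers the qualitative features of $M_N$ to $M_\Pi$. Continuity, nonnegativity and the symmetry $M_\Pi(\bar w)=M_\Pi(w)$ follow because each $M_N$ has them for $N$ large. All $M_N$, and hence $M_\Pi$, are supported in the fixed compact disk $\{|w|\le \sum_{\omega\in\Omega}c_\omega\}$, where the finiteness of this number is guaranteed by (M1); uniform convergence on this compact disk combined with $\int_\C M_N(w)\,|dw|=1$ yields $\int_\C M_\Pi(w)\,|dw|=1$. The containment ${\rm supp}\,M_\Pi \subseteq \{|w|\le \sum_{\omega}c_\omega\}$ is a direct consequence: for any $w_0$ with $|w_0|>\sum_{\omega}c_\omega$, a neighbourhood of $w_0$ is eventually disjoint from every ${\rm supp}\,M_N$ by \eqref{f_207} and (M1), so $M_\Pi$ vanishes there by uniform convergence.

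The main obstacle is the reverse inclusion, since uniform convergence of continuous functions does not in general preserve supports. The plan is to upgrade \eqref{f_207} to a quantitative statement. For a fixed $w_0$ with $|w_0|<\sum_{\omega}c_\omega$, choose $N_0$ so that $|w_0|<\sum_{|\omega|\le N_0}c_\omega$; then $w_0$ lies in the interior of ${\rm supp}\,M_N$ for every $N\ge N_0$. To convert this into positivity of $M_\Pi$ at $w_0$, produce a neighbourhood $U$ of $w_0$ and $\delta>0$ such that $\inf_U M_N\ge \delta$ for all sufficiently large $N$; the uniform limit then satisfies $M_\Pi\ge \delta/2$ on $U$, giving $w_0 \in {\rm supp}\,M_\Pi$. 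Such a uniform lower bound is supplied by the convolution structure of $M_N$: each additional factor $m_\omega$ is a rotationally invariant probability measure on a circle, and convolving sufficiently many of them yields a continuous density bounded below on compact subsets of the open disk, a bound that is stable under adding further convolution factors. This is essentially the argument used in \cite[Proposition 5.6]{Ma21}, which we invoke here with the minor modifications needed to accommodate our index set $\Omega$.
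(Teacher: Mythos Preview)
Your proposal is correct and follows essentially the same route as the paper. The paper's own proof is a one-line deferral---``by the same argument as in the proof of \cite[Proposition~5.6]{Ma21} and \eqref{f_207}''---and you have simply unpacked what that argument consists of: Fourier inversion for $M_N$ once enough factors make $\widetilde{M}_N\in L^1$, then uniform convergence from the $L^1$-convergence of $\widetilde{M}_N\to\widetilde{M_\Pi}$, then transfer of the qualitative properties, with the support equality handled via \eqref{f_207} and the convolution-structure lower bound from \cite{Ma21}.
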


Finally, by applying the argument in \cite[Section 6]{Ma21} 
to the above case, the proof of Theorem \ref{thm_2_1} is completed. 
\hfill $\Box$

\subsection{Examples satisfying condition \eqref{eq_0903_2}} \label{section_2_3} 
There are five nontrivial zeros of $\zeta(s)$ 
that satisfy $0<\Im(\rho)<35$ from a well-known numerical table. 
We order these zeros so that their imaginary parts increase, and denote them by 
$\rho_n=1/2-i\omega_n$ ($\omega_n \in \R_{<0}$, $1 \leq n \leq 5$). 
Then we verify that $\Pi=(\Omega_\zeta^+, a_H)$ 
satisfies \eqref{eq_0903_2} by 
\[
\frac{1}{2}\sum_{\Im(\rho)>0}\frac{m_\rho}{|\rho(\rho+1)|} 
>
\frac{1}{2}\sum_{n=1}^{5} \frac{1}{|\rho_n(\rho_n+1)|} = 0.00541 \ldots 
>
\frac{1}{|\rho_1(\rho_1+1)|} =  0.00497 \ldots. 
\]
Furthermore, we can verify that \eqref{eq_0903_2} also holds 
for $\Pi=(\Omega_\zeta^+, a_{H_\ell})$ for general $\ell \in \R$ as follows. 
It suffices to prove that the inequality
\[
\frac{1}{2}\sum_{n=1}^{5}\frac{1}{|(\rho_n-\ell)((1-\rho_n)-\ell)|} 
>
\frac{1}{|(\rho_1-\ell)((1-\rho_1)-\ell)|}
\]
holds for general $\ell$, that is equivalent to
\begin{equation} \label{eq_0904_1}
\sum_{n=2}^{5}\frac{1}{|(\rho_n-\ell)((1-\rho_n)-\ell)|} 
>
\frac{1}{|(\rho_1-\ell)((1-\rho_1)-\ell)|}.
\end{equation}
If $\rho=1/2-i\omega$ ($\omega \in \R$), we have
\[
\left|
\frac{1}{(\rho-\ell)((1-\rho)-\ell)} 
\right|
= \frac{1}{\omega^2+(\ell-1/2)^2}.
\]
Hence, inequality~\eqref{eq_0904_1} becomes
\[
\sum_{n=2}^5\frac{1}{\omega_n^2+(\ell-1/2)^2}
>
\frac{1}{\omega_1^2+(\ell-1/2)^2}.
\]
Setting $x=\ell-1/2$, define
\begin{equation} \label{eq_0904_2}
f(x) = \sum_{n=2}^5\frac{1}{\omega_n^2+x^2}
- \frac{1}{\omega_1^2+x^2}.
\end{equation}
It remains to show that $f(x)>0$ for all $x \geq 0$.  
For $x=0$ ($\ell=1/2$), we have
\[
0.00586 \ldots =
\sum_{n=2}^5 \frac{1}{\omega_n^2}
>
\frac{1}{\omega_1^2} = 0.00500 \ldots,
\]
hence $f(0)>0$. Extracting $x^2$ from the denominators in~\eqref{eq_0904_2}, we obtain
\[
f(x) = \frac{1}{x^2}
\left(
\sum_{n=2}^5 \frac{1}{1+(\omega_n/x)^2}
-
\frac{1}{1+(\omega_1/x)^2}
\right) 
\sim \frac{1}{x^2}\left(
\sum_{n=2}^5 1
-1
\right) = \frac{3}{x^2} > 0
\]
as $x \to \infty$. More precisely, using
\[
1 - x < \frac{1}{1+x} < 1 - x + x^2,
\]
we estimate $f(x)$ from below as
\[
\aligned 
\sum_{n=2}^5 \frac{1}{1+(\omega_n/x)^2}
-
\frac{1}{1+(\omega_1/x)^2}
& > \sum_{n=2}^5 \bigl(1-(\omega_n/x)^2\bigr)-1+(\omega_1/x)^2-(\omega_1/x)^4 \\[6pt]
& = 3 - \frac{1}{x^2}\left( \sum_{n=2}^{5}\omega_n^2 - \omega_1^2 \right) - \frac{\omega_1^4}{x^4}. 
\endaligned 
\]
For $x \geq 60$ we then have
\[
0<\frac{1}{x^2}\left( \sum_{n=2}^{5}\omega_n^2 + \omega_1^2 \right) + \frac{\omega_1^4}{x^4}<1,
\]
and hence
\[
f(x) > \frac{2}{x^2} >0 \qquad (x \geq 60).
\]
Finally, for $0 \leq x \leq 60$, one can directly verify numerically 
that $f(x)>0$. Therefore, $f(x)>0$ holds for all $x \geq 0$.

\section{A condition for $\Omega$ to be a subset of $\R$} \label{section_3}

For a pair $\Pi=(\Omega, a)$ satisfying (M1) and (M2), we define 
\begin{equation} \label{f_301}
g_{\Pi}(t):= f_\Pi(t)-f_\Pi(0)=\sum_{\omega \in \Omega} a(\omega) (e^{-it\omega}-1)
\end{equation}
for real $t$. 
The function $g_\Pi(t)$ is a continuous function on the real line 
with $g_\Pi(0)=0$, 
because the series on the right-hand side of  \eqref{f_301} 
converges absolutely and uniformly on any finite closed interval of $\R$ 
by (M1) and (M2). 

\begin{proposition} \label{prop_3_1} 
Let $\Pi=(\Omega,a)$ be a pair satisfying (M1) and (M2). 
Then, the real part $\Re(g_\Pi(t))$ is bounded on $[0,\infty)$ if and only if $\Omega$ is a subset of 
the closed lower half-plane $\C\setminus\C_+=\{z  \,|\,\Im(z) \leq 0\}$. 
\end{proposition}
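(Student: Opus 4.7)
The plan is to handle the two implications separately: the ``if'' direction is a direct consequence of (M1), while the ``only if'' direction is handled by a Laplace-transform argument combined with analytic continuation of a meromorphic function.

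For the ``if'' direction, assume $\Im(\omega)\leq 0$ for every $\omega\in\Omega$. Then for $t\geq 0$ one has $|e^{-it\omega}|=e^{t\Im(\omega)}\leq 1$, so $|e^{-it\omega}-1|\leq 2$, and (M1) yields the uniform bound $|g_\Pi(t)|\leq 2\sum_{\omega\in\Omega}|a(\omega)|<\infty$; in particular $\Re(g_\Pi)$ is bounded on $[0,\infty)$.

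For the ``only if'' direction, suppose $\Re(g_\Pi)$ is bounded by some $B$ on $[0,\infty)$. I would consider the Laplace transform
\[
L(s):=\int_0^\infty \Re(g_\Pi(t))\,e^{-st}\,dt,\qquad \Re(s)>0,
\]
which is holomorphic on $\Re(s)>0$ with $|L(s)|\leq B/\Re(s)$, and hence has no poles there. The pointwise bound $|g_\Pi(t)|\leq(e^{ct}+1)\sum_\omega|a(\omega)|$ from (M1) and (M2) legitimises Fubini for $\Re(s)>c$ and gives
\[
L(s)=\tfrac{1}{2}\bigl(F(s)+\overline{F(\bar s)}\bigr),\qquad F(s):=\sum_{\omega\in\Omega}a(\omega)\!\left(\frac{1}{s+i\omega}-\frac{1}{s}\right).
\]
The meromorphic function $F$ has simple poles at $s=-i\omega$ with residue $a(\omega)$, while $\overline{F(\bar s)}$ has simple poles at $s=i\bar\omega$ with residue $\overline{a(\omega)}$, for each $\omega\in\Omega$. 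By analytic continuation the identity extends to all of $\Re(s)>0$, so the holomorphy of $L$ forces the combined right-hand side to be pole-free on $\Re(s)>0$. Assuming for contradiction that some $\omega_0\in\Omega$ has $\Im(\omega_0)>0$, the point $s_0=-i\omega_0$ lies in $\Re(s)>0$ and is a pole of $F$ with residue $a(\omega_0)\neq 0$; this can be cancelled only by a pole of $\overline{F(\bar s)}$ at the same point, which requires $-\bar\omega_0\in\Omega$. In the generic case that $-\bar\omega_0\notin\Omega$, this already yields the desired contradiction.

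The step I expect to be the main obstacle is the paired configuration $\omega_0,\,-\bar\omega_0\in\Omega$ with $a(\omega_0)+\overline{a(-\bar\omega_0)}=0$, in which the residues of $F$ and $\overline{F(\bar s)}$ genuinely cancel and a crude pole count is silent. Eliminating this case requires either a structural input on $(\Omega,a)$---for $\Omega_\zeta^+$, a direct check shows $-\bar\omega_0\notin\Omega_\zeta^+$ whenever $\omega_0\in\Omega_\zeta^+$ has $\Im(\omega_0)>0$, because $-\bar\omega_0$ then corresponds to a nontrivial zero of $\zeta(s)$ with negative imaginary part---or a finer asymptotic analysis at the dominant exponent $\beta^\ast:=\sup_{\omega\in\Omega}\Im(\omega)$, for instance by testing $\Re(g_\Pi)(t)$ against $e^{(-\beta^\ast+i\alpha)t}$ and extracting the linear-in-$T$ growth that an uncancelled almost-periodic component would produce in $\int_0^T\Re(g_\Pi(t))\,e^{(-\beta^\ast+i\alpha)t}\,dt$.
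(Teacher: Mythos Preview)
Your Laplace-transform approach is precisely the paper's (with $s=-iz$), and your ``if'' direction is fine. You have also correctly identified an obstruction that the paper's own proof glosses over: the pole of $F$ at $-i\omega_0$ can genuinely cancel against a pole of $\overline{F(\bar s)}$ when $-\bar\omega_0\in\Omega$ and $a(\omega_0)+\overline{a(-\bar\omega_0)}=0$. This is not a removable technicality---under (M1) and (M2) alone the proposition is \emph{false}. For an explicit counterexample, fix $\alpha,\beta>0$, set $\omega_0=\alpha+i\beta$, take $\Omega=\{\omega_0,\,\bar\omega_0,\,-\omega_0,\,-\bar\omega_0\}$, and put $a(\omega_0)=a(\bar\omega_0)=1$, $a(-\omega_0)=a(-\bar\omega_0)=-1$. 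Then (M1), (M2), (M3), and even (S1) all hold, yet
\[
f_\Pi(t)=e^{-it\omega_0}+e^{-it\bar\omega_0}-e^{it\omega_0}-e^{it\bar\omega_0}=-4i\sin(\alpha t)\cosh(\beta t),
\]
so $\Re(g_\Pi(t))\equiv 0$ is bounded while $\omega_0\in\Omega\cap\C_+$.

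Consequently your second proposed rescue---a finer asymptotic extraction at the top exponent---cannot succeed in this generality: in the counterexample there is simply nothing to extract. Your first rescue, imposing a structural hypothesis, is the correct fix. The argument (yours and the paper's) becomes valid as soon as one rules out the paired configuration, for instance by requiring $\Omega\cap(-\overline{\Omega})=\emptyset$. This does hold for $\Omega_\zeta^+$: every $\omega\in\Omega_\zeta^+$ has $\Re(\omega)=-\Im(\rho)<0$, so $\Re(-\bar\omega)>0$ and $-\bar\omega\notin\Omega_\zeta^+$. Thus the paper's applications to $\zeta(s)$ are unaffected, but Proposition~3.1 as stated needs an additional hypothesis.
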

\begin{proof} 
If $\Omega \subset \C\setminus\C_+$, 
the function $g_\Pi(t)$ is obviously bounded on $[0,\infty)$ 
by definition \eqref{f_301}. 
Therefore, $\Re(g_\Pi(t))$ is also bounded on $[0,\infty)$. 
To show the converse, 
we consider the Fourier transform of $\Re(g_\Pi(t))\mathbf{1}_{\geq 0}(t)$. 
By the integral formula
\[
\int_{0}^{\infty}  (e^{-it\omega}-1)\,e^{izt}\,dt
= \frac{i\omega}{z(z-\omega)} \quad \text{for} \quad  \Im(z)>\max\{0, \Im(\omega)\}, 
\]
we obtain
\begin{equation} \label{f_302}
\int_{0}^{\infty}  \Re(g_\Pi(t))\,e^{izt}\,dt
= \int_{0}^{\infty}  \frac{1}{2}(g_\Pi(t)+\overline{g_\Pi(t)})\,e^{izt}\,dt
= -\frac{i}{z^2} Q_\Pi(z)
\end{equation}
for $\Im(z) > c$, where 
\[
Q_\Pi(z) := \frac{1}{2}
\left[
 \sum_{\omega \in \Omega} a(\omega) \frac{-z\omega}{z-\omega} 
+ \sum_{\omega \in \Omega} \overline{a(\omega)} \frac{-z(-\bar{\omega})}{z-(-\bar{\omega})}  
\right]
\]
and $c \geq 0$ is the constant in (M2). 
By definition and (M2), $Q_\Pi(z)$ is meromorphic on $\C$ 
and holomorphic on $\C \setminus (\Omega \cup (-\overline{\Omega}))$. 

Suppose that $\Re(g_\Pi(t))$ is bounded on $[0,\infty)$. 
Then, the left-hand side of \eqref{f_302} 
converges absolutely and uniformly on any compact subset 
in the upper half-plane $\C_+= \{z\,|\,\Im(z) > 0\}$. 
Therefore, $Q_\Pi(z)$ is holomorphic in $\C_+$. 
However, if $\Omega$ has an element in $\C_+$, 
it must be a pole of $Q_\Pi(z)$, 
since $a(\omega)\not=0$ for every $\omega \in \Omega$ by definition.   
This is a contradiction. 
Hence, $\Omega \subset \C\setminus\C_+$ if $\Re(g_\Pi(t))$ is bounded on $[0,\infty)$.
\end{proof}

To prove  an extension of Theorem \ref{thm_1_2} under the general setting in Section \ref{sec_2_1},  
we add the following third condition. 
\begin{enumerate}
\item[(M3)] The set $\Omega$ is closed under the complex conjugation 
$\omega \mapsto \bar{\omega}$. 
\end{enumerate}
Condition (M3) is satisfied by both $(\Omega_\zeta^+,\,a_H)$ and $(\Omega_\zeta^+,\,a_{H_1})$, 
because, if $\rho=1/2-i\omega$ is a nontrivial zero of $\zeta(s)$, 
$1-\bar{\rho}=1/2-i\bar{\omega}$ is also a zero with the same multiplicity. 

Due to the symmetry of $\Omega$ in (M3), the following result immediately follows from Proposition  \ref{prop_3_1}.

\begin{corollary} \label{cor_3_1}
Let $\Pi=(\Omega,a)$ be a pair satisfying (M1), (M2), and (M3). 
Then, the real part $\Re(g_\Pi(t))$ is bounded on $[0,\infty)$ 
if and only if $\Omega$ is a subset of $\R$. 
\end{corollary}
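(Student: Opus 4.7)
The plan is to deduce this corollary directly from Proposition~\ref{prop_3_1} by using the symmetry afforded by (M3). The underlying observation is purely set-theoretic: a subset of the closed lower half-plane $\C\setminus\C_+$ that is also invariant under complex conjugation must lie in $\R$, since conjugation sends the open lower half-plane to the open upper half-plane, and the only fixed locus of $\omega \mapsto \bar\omega$ inside the closed lower half-plane is the real axis.

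For the ``if'' direction, the inclusion $\Omega \subset \R$ trivially implies $\Omega \subset \C\setminus\C_+$, so Proposition~\ref{prop_3_1} immediately yields boundedness of $\Re(g_\Pi(t))$ on $[0,\infty)$. For the ``only if'' direction, I would first invoke Proposition~\ref{prop_3_1} to conclude $\Im(\omega) \leq 0$ for every $\omega \in \Omega$. Then, for any $\omega \in \Omega$, condition (M3) guarantees $\bar\omega \in \Omega$ as well, and applying the same inequality to $\bar\omega$ gives $\Im(\bar\omega) = -\Im(\omega) \leq 0$, that is, $\Im(\omega) \geq 0$. Combining the two bounds forces $\Im(\omega) = 0$, and hence $\Omega \subset \R$.

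There is no real obstacle here; the argument is a one-line consequence of Proposition~\ref{prop_3_1}. It is worth noting that no additional condition on the coefficient function $a$ is needed: although (M3) is stated as a symmetry of the \emph{set} $\Omega$ only, the pairing with $a$ plays no role in this reduction. This is precisely why the corollary can be packaged as a clean symmetric strengthening of Proposition~\ref{prop_3_1}, and it is what allows Theorem~\ref{thm_1_2} to read the Riemann hypothesis off boundedness of $H(X)$ via the pair $(\Omega_\zeta^+, a_H)$, which satisfies (M3).
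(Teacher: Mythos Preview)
Your argument is correct and is exactly the symmetry observation the paper has in mind: the paper's proof consists of the single sentence ``Due to the symmetry of $\Omega$ in (M3), the following result immediately follows from Proposition~\ref{prop_3_1},'' and you have simply unpacked that sentence. There is nothing to add.
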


\begin{remark}
In Corollary \ref{cor_3_1}, 
if $\Re(g_\Pi(t))$ is bounded on $[0,\infty)$, 
then $\Omega \subset \R$, 
which implies that $\Re(g_\Pi(t))$ is bounded on $\R$ 
by \eqref{f_301}.  
Therefore, $[0,\infty)$ in Corollary \ref{cor_3_1} can be replaced by $\R$, 
but the behavior of $\Re(g_\Pi(t))$ on $(-\infty,0)$ is not necessary below. 
\end{remark}

Using Corollary \ref{cor_3_1}, we obtain the following result.

\begin{theorem} \label{thm_3_1}
Let $\Pi=(\Omega,a)$ be a pair satisfying 
(M1), (M2), and (M3). 
We assume that equality \eqref{f_205} 
holds for any test function $\Phi: \R \to \C$ which is locally Riemann integrable. 
Then $\Omega$ is a subset of $\R$. 
\end{theorem}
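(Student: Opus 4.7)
The plan is to reduce Theorem~\ref{thm_3_1} to Corollary~\ref{cor_3_1}. Since $g_\Pi(t)$ differs from $f_\Pi(t)$ only by the constant $f_\Pi(0)$, it suffices to show that $\Re(f_\Pi(t))$ is bounded on $[0,\infty)$; Corollary~\ref{cor_3_1} then yields $\Omega\subset\R$. The task is therefore to upgrade the distributional boundedness implicit in~\eqref{f_205} to pointwise boundedness by exploiting the structure of $f_\Pi$ as an absolutely convergent sum of complex exponentials.

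First, I would extract from \eqref{f_205} a density-type statement. Assuming $M_\Pi^\Re$ is integrable (as in Corollary~\ref{cor_2_1}, where it is continuous and compactly supported), for any $\varepsilon_0>0$ I choose $R_0$ so large that $\int_{|u|>R_0}M_\Pi^\Re(u)\,|du|<\varepsilon_0$. Applying \eqref{f_205} to a nonnegative continuous cutoff $\Phi_R$ that vanishes on $[-R,R]$ and equals $1$ outside $[-R-1,R+1]$ with $R\geq R_0$ then gives $\limsup_{T\to\infty}T^{-1}\cdot\mathrm{meas}\{t\in[0,T]:|\Re(f_\Pi(t))|>R+1\}\leq\varepsilon_0$. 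Thus the upper density of $\{t\geq 0:|\Re(f_\Pi(t))|>R\}$ tends to $0$ as $R\to\infty$, and vanishes for $R$ beyond the support of $M_\Pi^\Re$.

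Next I would decompose $f_\Pi$ along the imaginary parts of its exponents. For $b\in\Im(\Omega)$, set $\Omega_b=\{\omega\in\Omega:\Im(\omega)=b\}$ and $F_b(t)=\sum_{\omega\in\Omega_b}a(\omega)e^{-it\Re(\omega)}$; by~(M1) each $F_b$ is a bounded Bohr almost periodic function, and
\[
\Re(f_\Pi(t))=\sum_{b\in\Im(\Omega)}e^{bt}\,\Re(F_b(t)).
\]
If $\Re(F_b)\equiv 0$ for every $b>0$, then $|\Re(f_\Pi(t))|\leq\sum_{\omega\in\Omega}|a(\omega)|<\infty$ on $[0,\infty)$, so Corollary~\ref{cor_3_1} finishes the proof. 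Otherwise let $b^*=\sup\{b>0:\Re(F_b)\not\equiv 0\}$ and assume it is attained. Then $\Re(F_{b^*})$ is a nonzero real Bohr almost periodic function, so by the mean-value theory of such functions there exist $\delta,\eta>0$ with $\{t\geq 0:|\Re(F_{b^*}(t))|\geq\delta\}$ of lower density at least $\eta$. The lower-order contributions $\sum_{b<b^*}e^{bt}\Re(F_b(t))$ are $o(e^{b^*t})$ by dominated convergence (dominated by the summable $\sum_\omega|a(\omega)|$), so on this set $|\Re(f_\Pi(t))|\geq(\delta/2)e^{b^*t}$ for $t$ large, and hence $\{t:|\Re(f_\Pi(t))|>R\}$ has upper density $\geq\eta$ for every $R$, contradicting the density statement above.

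The main obstacle is the case where $b^*$ is not attained, so that infinitely many $F_b$ accumulate near $b^*$ without any single layer dominating. I would handle this by fixing small $\varepsilon>0$ and restricting attention to the block $\{\omega\in\Omega:\Im(\omega)>b^*-\varepsilon\}$, which contains some $\omega$ with $\Re(F_{\Im(\omega)})\not\equiv 0$ by the definition of $b^*$. Running the almost-periodic argument on the corresponding dominant layer (of exponential order between $b^*-\varepsilon$ and $b^*$) yields the same contradiction, with the tail $\{\omega:\Im(\omega)\leq b^*-\varepsilon\}$ controlled via absolute convergence~(M1) by the bound $e^{(b^*-\varepsilon)t}\sum_\omega|a(\omega)|$.
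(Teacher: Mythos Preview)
Your reduction to Corollary~\ref{cor_3_1} via boundedness of $\Re(f_\Pi)$ on $[0,\infty)$ matches the paper's strategy, and the density extraction from~\eqref{f_205} together with the attained case are correct. The gap is in the non-attained case, and it is real. If you pick a layer $b_0\in(b^*-\varepsilon,b^*)$ with $\Re(F_{b_0})\not\equiv 0$, the higher layers $b\in(b_0,b^*)$ have total coefficient mass $\sigma(b_0):=\sum_{b_0<\Im(\omega)<b^*}|a(\omega)|$, which you can make small by pushing $b_0\to b^*$; but the only available bound on their contribution to $\Re(f_\Pi(t))$ is $e^{b^*t}\sigma(b_0)$, and $e^{(b^*-b_0)t}\sigma(b_0)$ dominates your main term $e^{b_0 t}\,|\Re(F_{b_0}(t))|$ for large $t$ no matter how small $\sigma(b_0)$ is. The phrase ``the corresponding dominant layer'' presupposes precisely what fails when $b^*$ is not attained: there is no dominant layer to isolate, and nothing in your sketch supplies the missing estimate.

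The paper's route is structurally different and avoids this entirely: it uses only continuity of $\Re(f_\Pi)$, not the exponential-sum decomposition. From unboundedness one extracts well-separated times $t_n$ (with $t_{n+1}>t_n+2$) and well-separated values $a_n=|\Re(f_\Pi(t_n))|>R$ (with $a_{n+1}>a_n+2$), chooses $\delta_n<1$ so that the value stays within $1$ of $a_n$ on $(t_n-\delta_n,t_n+\delta_n)$, and defines the \emph{unbounded} test function $\Phi(u)=t_n/\delta_n$ on $\{\,||u|-a_n|<1\,\}$ and $0$ elsewhere. This $\Phi$ is locally Riemann integrable, supported outside $[-R,R]$ so the right side of~\eqref{f_205} vanishes, yet along $T=t_N+\delta_N$ the left side is at least $2t_N/(t_N+\delta_N)>1$. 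The unbounded heights $t_n/\delta_n$ compensate for arbitrarily short or sparse excursions---exactly the information your bounded cutoffs throw away and then try to recover via almost-periodicity.
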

\begin{proof}
By Corollary \ref{cor_3_1}, 
it is sufficient to prove that 
$\Re(f_\Pi(t))$ is bounded on $[0,\infty)$ if \eqref{f_205} 
holds for any locally Riemann integrable function $\Phi$. 
We prove the boundedness of $\Re(f_\Pi(t))$ on $[0,\infty)$ by contradiction. 
We denote $|\Re(f_\Pi(t))|$ by $F(t)$ for simplicity. 
First, we note that there exists $R>0$ such that 
\[
{\rm supp}\,M_\Pi^\Re \subset \{u \in \R\,:\, |u| \leq R\}, 
\]
since the support of $M_\Pi^\Re(u)$ is compact by Corollary \ref{cor_2_1}. 
We assume that $F(t)\,(\geq 0)$ is unbounded on $[0,\infty)$. 
Then, there exists $t_1>0$ that satisfies 
\[
F(t_1) > R+1 \quad \text{and} \quad  t_1 > R+1. 
\]
For this, we write $a_1=F(t_1)$. 
Subsequently, again from the unboundedness, 
there exists $t_2 > t_1+2$ that satisfies $F(t_2) > a_1 +2$. 
For this, we write $a_2=F(t_2)$. 
By repeating the above process, 
we obtain sequences $\{t_n\}_{n \geq 1}$ and $\{a_n\}_{n \geq 1}$ 
that satisfy $a_n=F(t_n)$, 
\[
t_{n+1}> t_n+2, \quad \text{and} \quad a_{n+1}=F(t_{n+1}) > a_n+2. 
\]
By definition, these two sequences are strictly monotonically increasing. 

For each $t_n$, there exists $0<\delta_n<1$ such that 
$|t-t_n| <\delta_n$ implies $|F(t)-a_n| <1$
by the continuity of $F(t)$. 
Note that no $t \in \R$ satisfies $|t-t_n|<1$ for two different $n$ simultaneously, 
because $t_{n+1}>  t_n+2$.  

Now we define the test function $\widetilde{\Phi}$ by 
\[
\widetilde{\Phi}(u) :=
\begin{cases}
~ \displaystyle{\frac{t_n}{\delta_n}} & \text{if $|u-a_n|<1$}, \\[8pt]
~ 0 & \text{otherwise}. 
\end{cases}
\]
This $\widetilde{\Phi}$ is clearly a Riemann integrable function, with the support
\[
\bigcup_{n=1}^{\infty} [a_n-1,a_n+1]. 
\] 
Note that no $u \in \R$ satisfies $|u-a_n|<1$ for two different $n$ simultaneously, 
because $a_{n+1}>a_n+2$ by definition.  

For the test function $\Phi(u):=\widetilde{\Phi}(|u|)$, 
the value of the right-hand side of \eqref{f_205} is zero, 
since the support of $\Phi$ is contained in $\{u >R\}$ 
by $a_1-1=F(t_1)-1>R+1-1$, and $M_\Pi^\Re(u)=0$ if $u >R$. 
However, the value of the left-hand side of \eqref{f_205} is not zero as follows.

By definition of $\delta_n$, if $|t-t_n|<\delta_n$, 
then 
$|F(t)-a_n|<1$, and therefore $\widetilde{\Phi}(F(t))=t_n/\delta_n$. 
Hence, for $T=t_N+\delta_N$, 
\[
\aligned 
\frac{1}{T}\int_{0}^{T} \Phi(\Re(f_\Pi(t))) \, dt
& = 
\frac{1}{t_N+\delta_N} \int_{0}^{t_N+\delta_N} \widetilde{\Phi}(F(t)) \, dt \\
& \geq 
\frac{1}{t_N+\delta_N} \sum_{n=1}^{N} \int_{t_n-\delta_n}^{t_n+\delta_n} \widetilde{\Phi}(F(t)) \, dt \\
& =
\frac{1}{t_N+\delta_N} \sum_{n=1}^{N} \int_{t_n-\delta_n}^{t_n+\delta_n} \frac{t_n}{\delta_n} \, dt 
 = 
\frac{1}{t_N+\delta_N}  \sum_{n=1}^{N}\frac{t_n}{\delta_n} \cdot 2\delta_n \\
& > 
\frac{2t_N}{t_N+\delta_N} >1.
\endaligned 
\] 
The final inequality on the right-hand side follows 
from $\delta_N<1$ and $t_N \geq t_1  > R+1>1$.
That is, 
$T^{-1}\int_{0}^{T} \Phi(F(t)) \, dt >1$ 
for the increasing sequence $T=t_N+\delta_N$, 
and hence, 
the left-hand side of \eqref{f_205} cannot have zero as the limit. 
This is a contradiction. 
\end{proof}

Applying Theorem \ref{thm_3_1} to $(\Omega_\zeta^+, a_{H})$, 
we obtain the following, because $\Omega_\zeta^+ \subset \R$ implies the Riemann hypothesis. 

\begin{corollary} \label{cor_3_2}
Theorem \ref{thm_1_2} holds. 
\end{corollary}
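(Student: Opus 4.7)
The plan is to apply Theorem~\ref{thm_3_1} to the pair $\Pi=(\Omega_\zeta^+,a_H)$, the same pair already used to derive Theorem~\ref{thm_1_1} from Corollary~\ref{cor_2_1}. The first step is to verify that this pair satisfies axioms (M1), (M2), and (M3). Condition (M1) is the classical absolute convergence $\sum_\rho m_\rho/|\rho(\rho+1)|<\infty$, which follows from the zero-counting bound $N(T)=O(T\log T)$ together with $|\rho(\rho+1)|\gg (\Im\rho)^2$ at large height. Condition (M2) holds with $c=1/2$, since for $\omega=i(\rho-1/2)$ one has $\Im(\omega)=\Re(\rho)-1/2\in(-1/2,1/2)$ for every zero in the critical strip. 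For condition (M3), the key observation is that if $\rho$ is a nontrivial zero with $\Im\rho>0$, then $1-\bar\rho$ is also a nontrivial zero with $\Im(1-\bar\rho)>0$ and the same multiplicity, and $i((1-\bar\rho)-1/2)=\overline{i(\rho-1/2)}$; this combines the functional equation with conjugate symmetry of the zero set.

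Second, I would identify the hypothesis of Theorem~\ref{thm_1_2} with that of Theorem~\ref{thm_3_1}. The relation $H(e^t)=\Re(f_\Pi(t))$ recorded just after Corollary~\ref{cor_2_1} shows that formula~\eqref{f_104} is formula~\eqref{f_205} with $M_H=M_\Pi^\Re$. Thus Theorem~\ref{thm_3_1} applies and forces $\Omega_\zeta^+\subset\R$. Translating back, $\omega=i(\rho-1/2)\in\R$ means $\Re(\rho)=1/2$, and since every remaining nontrivial zero of $\zeta(s)$ is the complex conjugate of one in the upper half-plane, this establishes the Riemann hypothesis. Once the Riemann hypothesis is in force we have $|X^{\rho-1/2}|=1$, so (M1) immediately gives $|H(X)|\le\sum_{\omega\in\Omega_\zeta^+}|a_H(\omega)|<\infty$, the boundedness asserted in Theorem~\ref{thm_1_2}.

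The deduction is essentially a bookkeeping application of Theorem~\ref{thm_3_1}, and I do not foresee a serious obstacle. The only step requiring a modest calculation is the verification of (M3): conjugation alone would send $\Omega_\zeta^+$ into the lower half-plane and so is not by itself sufficient, and one must pair it with the functional equation $\zeta(s)=\chi(s)\zeta(1-s)$ to supply the zero $1-\bar\rho$ with positive imaginary part and the correct multiplicity.
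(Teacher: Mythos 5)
Your proposal is correct and follows exactly the paper's route: apply Theorem~\ref{thm_3_1} to $\Pi=(\Omega_\zeta^+,a_H)$ and note that $\Omega_\zeta^+\subset\R$ is the Riemann hypothesis, with the axiom checks (M1)--(M3) being the same ones the paper records in Sections~\ref{section_2} and~\ref{section_3}. The only cosmetic difference is the order of conclusions (the paper's Theorem~\ref{thm_3_1} proof obtains boundedness first and then $\Omega\subset\R$ via Corollary~\ref{cor_3_1}, whereas you state $\Omega\subset\R$ first and deduce boundedness), which is immaterial.
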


The (obvious) analog of Theorem \ref{thm_1_2} for $H_1(X)$ 
is obtained by applying Theorem \ref{thm_3_1} to $(\Omega_\zeta^+, a_{H_1})$. 
(cf. The comment after Corollary \ref{cor_2_1} at the end of Section \ref{sec_2_1}.) 
Furthermore, 
the following result is also proven by replacing 
$\Re(f_\Pi(t))$, $M_\Pi^\Re(u)$, and 
$\widetilde{\Phi}(|u|)$ ($u \in \R$) 
with 
$f_\Pi(t)$, $M_\Pi(w)$, and 
$\widetilde{\Phi}(|w|)$ ($w \in \C$), respectively, 
in the proof of Theorem \ref{thm_3_1}.

\begin{theorem} 
Let $\Pi=(\Omega,a)$ be a pair satisfying 
(M1), (M2), and (M3).  
We assume that equality \eqref{f_203} 
holds for any test function $\Phi: \C \to \C$ which is locally Riemann integrable. 
Then $f_\Pi(t)$ is bounded on $[0,\infty)$. 
\end{theorem}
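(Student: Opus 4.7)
The plan is to mimic the contradiction argument used in the proof of Theorem~\ref{thm_3_1}, replacing $\Re(f_\Pi(t))$ by $f_\Pi(t)$, $M_\Pi^\Re(u)$ by $M_\Pi(w)$, and the radial real test function $\widetilde{\Phi}(|u|)$ by the radial complex test function $\widetilde{\Phi}(|w|)$. Condition (M3) is kept only for consistency with the ambient framework; it plays no active role in the bound-producing step, which is purely two-dimensional.

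First I would set $F(t) := |f_\Pi(t)|$, which is continuous on $\R$ by (M1) and (M2), and, using the standing interpretation (as in Theorem~\ref{thm_1_2}) that the function $M_\Pi$ appearing in \eqref{f_203} is compactly supported, fix $R>0$ with ${\rm supp}\,M_\Pi \subset \{w \in \C : |w| \leq R\}$. Assuming for contradiction that $F(t)$ is unbounded on $[0,\infty)$, the recursive construction from the proof of Theorem~\ref{thm_3_1} produces sequences $(t_n)_{n \geq 1}$, $(a_n)_{n \geq 1}$ and $(\delta_n)_{n \geq 1} \subset (0,1)$ such that $a_n = F(t_n)$, $t_1, a_1 > R+1$, $t_{n+1} > t_n + 2$, $a_{n+1} > a_n + 2$, and $|t-t_n| < \delta_n$ forces $|F(t)-a_n|<1$; the spacings guarantee that $\{(t_n-\delta_n, t_n+\delta_n)\}_n$ and $\{(a_n-1, a_n+1)\}_n$ are pairwise disjoint.

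Next I would define the step function $\widetilde{\Phi}(u) := t_n/\delta_n$ for $|u-a_n|<1$ and $0$ otherwise, and set $\Phi(w) := \widetilde{\Phi}(|w|)$ on $\C$. Its support lies in $\{w \in \C : |w| \geq a_1-1 > R\}$, so the right-hand side of \eqref{f_203} evaluates to zero. On the other hand, for $T = t_N + \delta_N$, the disjointness of the intervals $(t_n-\delta_n, t_n+\delta_n)$ yields exactly the same estimate as in Theorem~\ref{thm_3_1},
\[
\frac{1}{T}\int_{0}^{T} \Phi(f_\Pi(t))\, dt \geq \frac{1}{t_N + \delta_N}\sum_{n=1}^{N}\frac{t_n}{\delta_n}\cdot 2\delta_n > \frac{2 t_N}{t_N+\delta_N} > 1,
\]
contradicting the vanishing of the right-hand side of \eqref{f_203}.

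The only point that genuinely differs from Theorem~\ref{thm_3_1}, and thus the main obstacle to flag, is verifying that the radial test function $\Phi(w) = \widetilde{\Phi}(|w|)$ is locally Riemann integrable on $\C$: this follows because $\Phi$ is bounded on every compact subset of $\C$ and its set of discontinuities is a locally finite union of circles, hence of two-dimensional Lebesgue measure zero. Once this is granted, the argument is a line-by-line complex-plane adaptation of Theorem~\ref{thm_3_1}, with $|f_\Pi|$ in place of $|\Re f_\Pi|$ and the disc $\{|w|\leq R\}$ in place of the interval $[-R,R]$.
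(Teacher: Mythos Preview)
Your proposal is correct and is precisely the approach the paper indicates: the paper's entire ``proof'' is the single remark that the result follows by replacing $\Re(f_\Pi(t))$, $M_\Pi^\Re(u)$, and $\widetilde{\Phi}(|u|)$ in the proof of Theorem~\ref{thm_3_1} with $f_\Pi(t)$, $M_\Pi(w)$, and $\widetilde{\Phi}(|w|)$, which is exactly what you carry out. Your added check that $\Phi(w)=\widetilde{\Phi}(|w|)$ is locally Riemann integrable on $\C$ (bounded on compacta since only finitely many annuli meet any bounded set, with discontinuity set a null union of circles) is a detail the paper leaves implicit, and your observation that (M3) is not actively used in this boundedness argument is also accurate.
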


\section{A class of screw functions} \label{section_4}

To relate the functions \eqref{f_202} attached to pairs $\Pi=(\Omega,a)$ 
with screw functions, we consider the following conditions: 
\begin{enumerate}
\item[(S1)] 
$a(\bar{\omega})=\overline{a(\omega)}$ for any $\omega \in \Omega$, 
\item[(S2)] $\Omega \subset \R$ and 
$a(\omega) >0$ for all $\omega \in \Omega$. 
\end{enumerate}
The former is a weaker condition than the latter, since (S2) implies (S1). 
The pairs $(\Omega_\zeta^+,\,a_{H_\ell})$ satisfy  (S1) unconditionally 
and (S2) under the Riemann hypothesis. 
The pair 
$(\Omega_\zeta^+,\,a_H)$ satisfies neither (S1) nor (S2) 
even if assuming the Riemann hypothesis. 
We have already noted that (M1) and (M2) imply 
the continuity of $g_\Pi(t)$ and $g_\Pi(0)=0$, 
but if we assume (M3) and (S1) in addition, $g_\Pi(t)$ satisfies 
\begin{equation} \label{f_401}
g_\Pi(-t)=\overline{g_\Pi(t)}.
\end{equation}

\begin{proposition} \label{prop_4_1} 
Let $\Pi=(\Omega,a)$ be a pair satisfying  (M1), (M2), (M3), and (S1).  
Then, $g_\Pi(t)$ defined in \eqref{f_301}  is a screw function on $\R$ 
if and only if $\Pi$ satisfies (S2).  
\end{proposition}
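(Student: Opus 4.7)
The plan is to prove the two implications separately, the "if" direction being a direct expansion and the "only if" direction combining an asymptotic growth analysis with the Cauchy--Schwarz consequence of the non-negative definiteness.

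For $(\Leftarrow)$, assume (S2). The continuity of $g_\Pi$, the identity $g_\Pi(0) = 0$, and the symmetry $g_\Pi(-t) = \overline{g_\Pi(t)}$ are already automatic from (M1), (M2), (M3), (S1) as recalled in the text, so the only thing to verify is that the kernel $G_{g_\Pi}$ is non-negative definite. A direct expansion gives
\[
G_{g_\Pi}(t, u) = \sum_{\omega \in \Omega} a(\omega)(e^{-it\omega} - 1)(e^{iu\omega} - 1) = \sum_{\omega \in \Omega} a(\omega)\, v_\omega(t)\, \overline{v_\omega(u)},
\]
with $v_\omega(t) := e^{-it\omega} - 1$, the last equality using $e^{iu\omega} - 1 = \overline{e^{-iu\omega} - 1}$ for real $\omega$. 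Since $a(\omega) > 0$, this displays $G_{g_\Pi}$ as a positive combination of rank-one non-negative definite kernels, so $\sum_{i,j} G_{g_\Pi}(t_i, t_j)\,\xi_i\,\overline{\xi_j} = \sum_\omega a(\omega)\bigl|\sum_i \xi_i v_\omega(t_i)\bigr|^2 \geq 0$ for any finite test vector.

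For $(\Rightarrow)$, assume $g_\Pi$ is a screw function. Setting $u = t$ in $G_{g_\Pi}$ yields $-2\Re g_\Pi(t) \geq 0$, hence $\Re g_\Pi(t) \leq 0$ for every $t$. I first show $\Omega \subset \R$ by contradiction: suppose some $\omega_0 \in \Omega$ has $\Im\omega_0 > 0$ (by (M3) one may always arrange this sign) and set $\beta^\ast := \sup\{\Im\omega : \omega \in \Omega\} > 0$, finite by (M2). Using the identity
\[
G_{g_\Pi}(t, s) + g_\Pi(-s) = \sum_{\omega \in \Omega} a(\omega)(e^{is\omega} - 1)\, e^{-it\omega},
\]
(M1) gives $|G_{g_\Pi}(t, s)| = O_s(e^{t\beta^\ast})$ as $t \to +\infty$; moreover, for a generic fixed $s \neq 0$ the Dirichlet-series-type exponential type of the right-hand side is exactly $\beta^\ast$, so along some sequence $t_n \to +\infty$ one has $|G_{g_\Pi}(t_n, s)| \geq c_s\, e^{t_n(\beta^\ast - \eta)}$ for every preassigned $\eta > 0$. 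On the other hand, the Cauchy--Schwarz inequality $|G_{g_\Pi}(t, s)|^2 \leq G_{g_\Pi}(t, t)\, G_{g_\Pi}(s, s)$ together with $G_{g_\Pi}(t, t) = O(e^{t\beta^\ast})$ forces $|G_{g_\Pi}(t, s)| = O(e^{t\beta^\ast/2})$. Choosing $\eta < \beta^\ast/2$ produces the contradiction, so $\Omega \subset \R$.

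With $\Omega \subset \R$ in hand, (S1) forces $a(\omega) \in \R$. To show $a(\omega) > 0$, I isolate each $\omega_0 \in \Omega$ via the integrated non-negative definiteness: for any compactly supported continuous $\phi : \R \to \C$,
\[
0 \leq \iint G_{g_\Pi}(t, u)\,\phi(t)\,\overline{\phi(u)}\, dt\, du = \sum_{\omega \in \Omega} a(\omega)\,\bigl|\hat\phi(\omega) - \hat\phi(0)\bigr|^2.
\]
Choosing a family $\phi_\varepsilon$ whose Fourier transform $\hat\phi_\varepsilon$ is a smooth bump concentrated on the ball $B(\omega_0, \varepsilon)$ (disjoint from $0$ for $\varepsilon < |\omega_0|$) with $\hat\phi_\varepsilon(\omega_0) = 1$ reduces the sum to $a(\omega_0) + R_\varepsilon$ with $|R_\varepsilon| \leq \sum_{\omega \in B(\omega_0, \varepsilon) \setminus \{\omega_0\}} |a(\omega)| \to 0$ as $\varepsilon \to 0$ by (M1). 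Hence $a(\omega_0) \geq 0$, and since $a(\omega) \neq 0$ by hypothesis, $a(\omega_0) > 0$. The main obstacle I anticipate is justifying the exponential-type lower bound $|G_{g_\Pi}(t_n, s)| \geq c_s\, e^{t_n(\beta^\ast - \eta)}$, especially in the subtle case where $\beta^\ast$ is only a supremum and not attained by any single $\omega$: this requires an effective-exponent argument controlling the interference among the near-maximal terms via (M1). Once that lower bound is secured, the Cauchy--Schwarz comparison and the Fourier-localization step isolating $a(\omega_0)$ are routine.
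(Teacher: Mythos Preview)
Your $(\Leftarrow)$ direction is identical to the paper's.

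For $(\Rightarrow)$ the paper proceeds quite differently to obtain $\Omega\subset\R$. It forms the one-sided Fourier transform
\[
\int_0^\infty g_\Pi(t)\,e^{izt}\,dt=-\frac{i}{z^2}\,Q(z),\qquad Q(z)=\sum_{\omega\in\Omega}a(\omega)\,\frac{-z\omega}{z-\omega},
\]
valid a priori for $\Im z>c$, and then invokes a structure theorem for screw functions (\cite[Satz 5.9]{KrLa77}) to conclude that $Q$ extends holomorphically to all of $\C_+$ (as a Nevanlinna function). Since the explicit series has a pole at every $\omega\in\Omega\cap\C_+$, this forces $\Omega\subset\R$, exactly as in the proof of Proposition~\ref{prop_3_1}. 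No growth comparison is needed.

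Your Cauchy--Schwarz route has the gap you yourself flag, and it is genuine rather than cosmetic. The required lower bound amounts to the statement that for $h(t)=\sum_\omega b_\omega e^{-it\omega}$ with $\sum|b_\omega|<\infty$, $b_\omega\neq0$, one has $\limsup_{t\to\infty}t^{-1}\log|h(t)|=\beta^\ast$. When $\beta^\ast$ is attained this can be extracted by isolating the almost-periodic top layer $e^{t\beta^\ast}\sum_{\Im\omega=\beta^\ast}b_\omega e^{-it\Re\omega}$ and using dominated convergence on the remainder; but when $\beta^\ast$ is only a supremum (which (M1)--(M3), (S1) do not exclude) the interference control you allude to is a real problem, and nothing in your sketch supplies it. The paper's appeal to \cite{KrLa77} bypasses this entirely.

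One smaller inconsistency: you take $\phi$ compactly supported and continuous, yet ask $\hat\phi_\varepsilon$ to be a smooth bump supported in $B(\omega_0,\varepsilon)$. A nonzero function and its Fourier transform cannot both have compact support. The fix is routine (take $\phi_\varepsilon$ Schwartz with $\hat\phi_\varepsilon$ a bump, and extend the discrete positive-definiteness to the integrated form using the boundedness of $G_{g_\Pi}$ once $\Omega\subset\R$), but as written the step is inconsistent. For comparison, the paper simply cites identity~\eqref{f_402} and leaves the isolation of a single coefficient implicit; your argument here is actually more explicit than the paper's, once repaired.
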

\begin{proof}
First,  we prove that $g_\Pi(t)$ is a screw function if $\Pi$ satisfies (S2). 
It suffices to show that $G_g(t,u)$ is nonnegative definite on $\R$, 
since we already confirm \eqref{f_401}. 
We have 
\[
G_{g}(t,u) = \sum_{\omega \in \Omega} a(\omega) (e^{-it\omega}-1)(e^{iu\omega}-1)
\]
by a direct calculation, and therefore
\begin{equation} \label{f_402}
\aligned 
\sum_{i,j=1}^{n} G_g(t_i,t_j) \,  \xi_i \overline{\xi_j} 
& = 
\sum_{\omega \in \Omega} a(\omega) 
\left|\sum_{i=1}^{n}  (e^{-it_i\omega}-1)\xi_i \right|^2 \geq 0
\endaligned 
\end{equation}
for any  $n \in \Z_{>0}$,  $t_i \in \R$ and $\xi_i \in \C$ by (S2). 
Hence, $G_g(t,u)$ is nonnegative definite on $\R$. 

Conversely, we suppose that $g_\Pi(t)$ is a screw function on $\R$. 
We define the function $Q(z)$ by using the right Fourier integral as 
\begin{equation*}
\int_{0}^{\infty}  g_\Pi(t)\,e^{izt}\,dt
= -\frac{i}{z^2} Q(z).
\end{equation*}
By (M1), (M2), and \eqref{f_301}, 
the left-hand side is integrated term by term, 
defining a holomorphic function on $\Im(z)>c$ such that
\begin{equation*}
Q(z) = 
 \sum_{\omega \in \Omega} a(\omega) \frac{-z\omega}{z-\omega}
\end{equation*} 
holds. 
Furthermore, the assumption implies that 
$Q(z)$ extends to a holomorphic function defined on $\C_+$ 
mapping $\C_+$ into $\C_+ \cup \R$ by \cite[Satz 5.9]{KrLa77}. 
Hence, $\Omega \subset \R$ is shown in the same way 
as in the proof of Proposition \ref{prop_3_1} and Corollary \ref{cor_3_1}. 
By the definition of screw functions, 
$G_g(t,u)$ must be nonnegative definite, 
which implies that $a(\omega)>0$ for all $\omega \in \Omega$ by \eqref{f_402}. 
\end{proof}

Applying Proposition \ref{prop_4_1} to 
$\Omega=\Omega_\zeta^+ \cup (-\Omega_\zeta^+)$ 
and $a=\tfrac{1}{2}a_{H_\ell}$, 
we obtain the following, 
where the right-hand side of $a$ is an obvious extension to $\Omega$.

\begin{corollary} \label{cor_4_1} 
The function 
\[
g_{H_\ell}(t) := H_\ell(e^t) - H_\ell(1)
\]
is a screw function on $\R$ 
if and only if the Riemann hypothesis holds. 
In particular, Theorem \ref{thm_1_3} holds. 
\end{corollary}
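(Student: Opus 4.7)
The plan is to apply Proposition~\ref{prop_4_1} to the pair $\Pi = (\Omega, a)$ with $\Omega = \Omega_\zeta^+ \cup (-\Omega_\zeta^+)$ and $a$ defined on $\Omega$ by
\[
a(\omega) = \frac{m_\rho}{(1/2 - i\omega - \ell)(1/2 + i\omega - \ell)} = \frac{m_\rho}{(1/2-\ell)^2 + \omega^2},
\]
where $\rho = 1/2 - i\omega$ is the corresponding nontrivial zero. This coincides with $\tfrac{1}{2} a_{H_\ell}$ on $\Omega_\zeta^+$, and since the formula is $\omega \mapsto -\omega$ symmetric (because $m_\rho = m_{1-\rho}$), it extends consistently to the rest of $\Omega$. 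First I would verify $g_{H_\ell}(t) = g_\Pi(t)$: the map $\omega \mapsto \rho = 1/2 - i\omega$ bijects $\Omega$ with the multiset of nontrivial zeros of $\zeta(s)$, and $e^{t(\rho - 1/2)} = e^{-it\omega}$, so a term-by-term comparison gives $H_\ell(e^t) = f_\Pi(t)$; subtracting the value at $t = 0$ yields the desired equality.

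Next I would check conditions (M1)--(M3) and (S1). (M1) follows from $|a(\omega)| = O(m_\rho/|\omega|^2)$ combined with the standard estimate $N(T) = O(T\log T)$. (M2) holds with $c = 1/2$, since $\Im(\omega) = \Re(\rho) - 1/2 \in (-1/2, 1/2)$. (M3) follows from the symmetry $\rho \mapsto 1 - \bar\rho$ of the nontrivial zeros, which preserves $\Omega_\zeta^+$ and acts as $\omega \mapsto \bar\omega$, hence preserves $\Omega$. Finally, (S1) is immediate from the displayed formula, since $\ell \in \R$ and $m_\rho = m_{1-\bar\rho}$ together give $a(\bar\omega) = \overline{a(\omega)}$.

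Proposition~\ref{prop_4_1} then reduces Corollary~\ref{cor_4_1} to showing that (S2) is equivalent to the Riemann hypothesis for this $\Pi$. The inclusion $\Omega \subset \R$ translates via $\omega = i(\rho - 1/2)$ into $\Re(\rho) = 1/2$ for every nontrivial zero $\rho$, which is exactly RH. Under RH, $\omega \in \R$ makes the denominator $(1/2 - \ell)^2 + \omega^2$ strictly positive (it would vanish only if $\ell = 1/2$ and $\omega = 0$, impossible since $\zeta(1/2) \neq 0$), so $a(\omega) > 0$. Hence (S2) holds exactly when RH does. The main obstacle is the bookkeeping in the first step: one must verify that the sum $\sum_\rho$ (with multiplicity) over nontrivial zeros corresponds precisely to $\sum_{\omega \in \Omega} a(\omega)\,e^{-it\omega}$ under the specific extension of $\tfrac{1}{2}a_{H_\ell}$ described above, and that the symmetric roles of $\rho$ and $1 - \rho$ do not cause double-counting once the coefficient is properly defined on $\Omega$.
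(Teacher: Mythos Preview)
Your proposal is correct and follows exactly the approach the paper indicates: apply Proposition~\ref{prop_4_1} to the pair $\Omega=\Omega_\zeta^+\cup(-\Omega_\zeta^+)$ with $a=\tfrac{1}{2}a_{H_\ell}$ extended symmetrically, then observe that (S2) for this pair is equivalent to RH. The paper states this in one sentence without verifying the hypotheses; you have simply supplied the routine checks of (M1)--(M3), (S1), the identification $f_\Pi(t)=H_\ell(e^t)$, and the positivity of $a$ under RH.
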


\section{A point mass formula at the origin for \\ infinitely divisible distributions} \label{section_5}

In this section we discuss the connection between the $M$-functions and 
the theory of infinitely divisible distributions, with the aid of screw functions.
First, we review the following result:

\begin{proposition} \label{prop_501} 
For a function $h(t)$ on $\R$, 
$\exp(h(t))$ is the characteristic function 
of an infinitely divisible distribution $\mu$ on $\R$: 
$\exp(h(t)) =  \int_{-\infty}^{\infty} e^{itx} \mu(dx)$,
if and only if 
\begin{equation} \label{f_501}
h(t) = -\frac{1}{2}At^2 + i B t + 
\int_{-\infty}^{\infty} 
\left(
e^{it\omega} -1 -\frac{it\omega}{1+\omega^2} 
\right)d\nu(\omega)
\end{equation}
for some $A \geq 0$, $B \in \R$, and a measure $\nu$ on $\R$ satisfying 
\[
\nu(\{0\})=0 \quad \text{and} \quad 
\int_{-\infty}^{\infty}{\rm min}(1,\omega^2)d\nu(\omega)<\infty. 
\]
If the measure $\nu$ in \eqref{f_501} satisfies 
$\int_{|\omega| \leq 1}|\omega|d\nu(\omega)< \infty$, 
then \eqref{f_501} can be rewritten as 
\begin{equation} \label{f_502}
h(t) = -\frac{1}{2}At^2 + i B_0 t + 
\int_{-\infty}^{\infty} (e^{it\omega} -1)d\nu(\omega)
\end{equation}
for some $B_0 \in \R$. 
\end{proposition}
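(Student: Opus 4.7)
The plan is to treat Proposition \ref{prop_501} as a restatement of the classical Lévy--Khintchine theorem on $\R$, combined with a routine reduction when the Lévy measure has an absolutely integrable first moment near the origin. Since the statement is introduced as a review, I expect the proof to consist almost entirely of invoking a standard reference plus a short calculation for the second form.

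For the main equivalence between infinite divisibility of $\mu$ and the representation \eqref{f_501}, I would cite the Lévy--Khintchine theorem directly in the form already referenced earlier in the paper, namely \cite[Theorem 8.1]{Sa99} (cf.\ also Remark~8.4 there). That result asserts that every infinitely divisible distribution $\mu$ on $\R$ has characteristic function of the form $\exp(h(t))$ with $h$ equal to the right-hand side of \eqref{f_501} for a uniquely determined triple $(A,B,\nu)$ with $A\geq 0$, $B \in \R$, and $\nu$ a measure on $\R$ satisfying $\nu(\{0\})=0$ and $\int \min(1,\omega^2)\,d\nu(\omega)<\infty$; and conversely every such triple defines an infinitely divisible $\mu$. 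This yields the first assertion verbatim.

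For the second assertion, assume in addition that $\int_{|\omega|\leq 1}|\omega|\,d\nu(\omega)<\infty$. The key observation is that the compensator $\omega/(1+\omega^2)$ is now $\nu$-integrable on all of $\R$: near $0$ it is dominated by $|\omega|$, which is integrable by hypothesis, while on $|\omega|>1$ it is bounded by $1/|\omega|\leq 1$, integrable against $\nu|_{\{|\omega|>1\}}$ since this restriction is a finite measure by the standard Lévy condition $\int\min(1,\omega^2)\,d\nu<\infty$. Similarly, $e^{it\omega}-1$ is $\nu$-integrable because $|e^{it\omega}-1|\leq |t||\omega|$ near zero and $|e^{it\omega}-1|\leq 2$ outside. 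One may therefore split the integrand in \eqref{f_501} as
\[
e^{it\omega}-1-\frac{it\omega}{1+\omega^2} = (e^{it\omega}-1) - \frac{it\omega}{1+\omega^2},
\]
integrate each piece separately, and pull the compensator out of the integral. This gives \eqref{f_502} with
\[
B_0 = B - \int_{-\infty}^{\infty}\frac{\omega}{1+\omega^2}\,d\nu(\omega) \in \R.
\]

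There is no substantive obstacle here, since Proposition \ref{prop_501} is explicitly framed as a review. The only point requiring care is the elementary integrability bookkeeping in the second part, which is handled once the bound $|e^{it\omega}-1|\leq\min(2,|t\omega|)$ and the split of $\nu$ into its restrictions to $\{|\omega|\leq 1\}$ and $\{|\omega|>1\}$ are in place.
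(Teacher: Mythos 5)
Your proposal is correct and takes essentially the same approach as the paper, which likewise proves the main equivalence by invoking \cite[Theorem 8.1 and Remark 8.4]{Sa99} specialized to $\R$. For the second form the paper simply cites \cite[(8.7)]{Sa99}, whereas you carry out the equivalent elementary step yourself (integrability of the compensator and of $e^{it\omega}-1$, then absorbing $-\int \omega(1+\omega^2)^{-1}\,d\nu(\omega)$ into the linear term); both are fine.
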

\begin{proof}
The first half is obtained by applying \cite[Theorem 8.1 and Remark 8.4]{Sa99} to the case of $\R$, 
and the second half is obtained by applying \cite[(8.7)]{Sa99} to the case of $\R$. 
\end{proof}

Using Proposition \ref{prop_501}, we obtain: 

\begin{proposition} \label{prop_502} 
Let $\Pi=(\Omega,a)$ be a pair satisfying (M1) and (S2).  
Then, for any $y>0$, there exists an infinitely divisible distribution $\mu_{\Pi,y}(x)$ on $\R$ 
whose characteristic function is $\exp(y\Re(g_{\Pi}(t)))$, that is, 
\begin{equation} \label{f_503}
\exp(y \Re(g_\Pi(t))) = \int_\R e^{itx} \mu_{\Pi,y}(dx).
\end{equation}
\end{proposition}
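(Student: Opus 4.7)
The plan is to apply Proposition \ref{prop_501} with $A=0$, $B_0=0$, and an explicit L\'evy measure $\nu_y$ built from the data of $\Pi$. Condition (S2) says $\Omega\subset\R$ and $a(\omega)>0$ for all $\omega\in\Omega$, so from \eqref{f_301} we have
\[
\Re(g_\Pi(t)) \;=\; \sum_{\omega\in\Omega} a(\omega)\bigl(\cos(t\omega)-1\bigr)
\;=\; \frac{1}{2}\sum_{\omega\in\Omega} a(\omega)\Bigl[(e^{it\omega}-1)+(e^{-it\omega}-1)\Bigr],
\]
where the absolute convergence on the right is guaranteed by (M1) together with the elementary estimate $|e^{is}-1|\le \min(2,|s|)$.

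I would then define the nonnegative Borel measure
\[
\nu_y \;:=\; \frac{y}{2}\sum_{\omega\in\Omega} a(\omega)\bigl(\delta_{\omega}+\delta_{-\omega}\bigr)
\]
on $\R$, which is well-defined because $a(\omega)>0$. Since $0\notin\Omega$ by our standing assumption at the start of Section \ref{section_2}, we have $\nu_y(\{0\})=0$. Summing term by term gives the identity
\[
y\,\Re(g_\Pi(t)) \;=\; \int_{\R}\bigl(e^{it\tau}-1\bigr)\,d\nu_y(\tau),
\]
valid for all $t\in\R$. This is precisely the form \eqref{f_502} of Proposition \ref{prop_501} with vanishing Gaussian and drift parts.

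It remains to verify the integrability conditions on $\nu_y$ required by Proposition \ref{prop_501}. For the global L\'evy condition, $\int_\R \min(1,\tau^2)\,d\nu_y(\tau)\le y\sum_{\omega\in\Omega} a(\omega)<\infty$ by (M1), using $\min(1,\omega^2)\le 1$. For the sharper hypothesis $\int_{|\tau|\le 1}|\tau|\,d\nu_y(\tau)<\infty$ needed to pass to the reduced representation \eqref{f_502}, we again bound the integrand by $1$ and apply (M1). Hence Proposition \ref{prop_501} produces an infinitely divisible distribution $\mu_{\Pi,y}$ on $\R$ with $\int_\R e^{itx}\mu_{\Pi,y}(dx)=\exp(y\,\Re(g_\Pi(t)))$, which is \eqref{f_503}.

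There is no real obstacle here; the content of the proposition is essentially that the series defining $\Re(g_\Pi(t))$ already has L\'evy--Khintchine shape when (S2) holds. The only point requiring care is checking that $\nu_y$ puts no mass at the origin and satisfies both integrability hypotheses in Proposition \ref{prop_501}, which reduces immediately to the absolute convergence (M1).
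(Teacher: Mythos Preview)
Your proof is correct. The route you take differs from the paper's main argument: the paper first invokes Proposition~\ref{prop_4_1} to certify that $y\,\Re(g_\Pi(t))$ is a screw function, then appeals to \cite[Theorem 5.1]{KrLa14} to obtain the representation \eqref{f_501} with $A=0$, and only then applies Proposition~\ref{prop_501}. You instead bypass the screw-function machinery entirely by writing down the L\'evy measure $\nu_y$ explicitly and checking the hypotheses of Proposition~\ref{prop_501} by hand, landing directly on the reduced form \eqref{f_502}. This works cleanly because $\nu_y$ is in fact a \emph{finite} measure (total mass $y\sum_{\omega}a(\omega)<\infty$ by (M1)), so every integrability condition is automatic. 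The paper itself acknowledges in the Remark following the proposition that exactly this shortcut is available and that the reference to \cite{KrLa14} was included only to highlight the connection with screw functions; your argument is essentially the one alluded to there (and attributed in spirit to \cite{NaSu23}). What the paper's route buys is conceptual: it situates the result inside the screw-function/infinite-divisibility correspondence that motivates Section~\ref{section_5}. What your route buys is self-containment: no external structure theorem is needed beyond the L\'evy--Khintchine form already recorded in Proposition~\ref{prop_501}.
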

\begin{proof}
First, we note that $\Pi=(\Omega,a)$ satisfies (M1), (M2), (M3), (S1), and (S2) 
because (M2), (M3), and (S1) follow from (S2). 
For any $y>0$, $y\Re(g_\Pi(t))$ is a real-valued screw function 
satisfying $y\Re(g_\Pi(0))=0$ and $f_\Pi(0)>0$ 
by assumptions, \eqref{f_301}, and Proposition \ref{prop_4_1}. 
Therefore, $y\Re(g_\Pi(t))$ has the form \eqref{f_501} 
with $A=0$ by \cite[Theorem 5.1]{KrLa14}. 
Then, for any $y>0$, 
there exists an infinitely divisible distribution $\mu_{\Pi,y}(x)$ 
such that \eqref{f_503} holds by Proposition \ref{prop_501}. 
\end{proof}

\begin{remark} 
In the proof of Proposition \ref{prop_502}, 
we referred to \cite[Theorem 5.1]{KrLa14} to prove \eqref{f_503}, 
but if we use \eqref{f_502} based on \eqref{f_301}, 
the result in \cite{KrLa14} is not necessary. 
Such an argument is similar to that made in \cite[Proof of (1)$\Rightarrow$(2) in Theorem 1.1]{NaSu23}.
However, in order to clarify the relation 
between screw functions and infinitely divisible distributions for the readers, 
we provided a proof using \cite{KrLa14}.
\end{remark}

Theorem \ref{thm_1_4} is obtained by applying the following result 
to $\Pi=(\Omega_\zeta^+, a_{H_1})$ 
under the Riemann hypothesis and ${\rm LIC}(\Omega_\zeta^+)$ 
since $H_1(e^t)=\Re(f_\Pi(t))$. 

\begin{theorem} \label{thm_5_1} 
Let $\Pi=(\Omega,a)$ be a pair satisfying (M1) and (S2). 
We assume ${\rm LIC}(\Omega)$. 
For $y>0$, let $\mu_{\Pi,y}(x)$ be 
the infinitely divisible distribution in Proposition \ref{prop_502}, 
and 
let $M_{\Pi}^\Re(u)$ be the $M$-function in Corollary \ref{cor_2_1}.
Then the value of the point mass of $\mu_{\Pi,y}(x)$ at the origin 
is given by the $M$-function as follows: 
\begin{equation} \label{f_504}
\aligned 
\mu_{\Pi,y}(\{0\}) 
= \exp(-y\, f_\Pi(0))\widetilde{M_\Pi^\Re}(-iy)
= \exp(-y\, f_\Pi(0))\prod_{\omega \in \Omega} J_0(iy|a(\omega)|),
\endaligned 
\end{equation}
where 
\[
\widetilde{M_\Pi^\Re}(z)
= \frac{1}{\sqrt{2\pi}}\int_\R M_\Pi^\Re(u)e^{izu} \, du \quad (z \in \C). 
\] 
\end{theorem}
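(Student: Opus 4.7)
The plan is to combine L\'evy's formula for the point mass of a probability distribution at the origin with Corollary~\ref{cor_2_1} applied to an exponential test function. For any probability measure $\mu$ on $\R$ with characteristic function $\hat\mu$, L\'evy's inversion formula yields $\mu(\{0\}) = \lim_{T\to\infty}\frac{1}{2T}\int_{-T}^T \hat\mu(t)\,dt$. Applying this to $\hat\mu_{\Pi,y}(t) = \exp(y\Re g_\Pi(t))$ from Proposition~\ref{prop_502} and using the evenness $\Re g_\Pi(-t) = \Re g_\Pi(t)$ that follows from \eqref{f_401}, I obtain
\[
\mu_{\Pi,y}(\{0\}) = \lim_{T\to\infty}\frac{1}{T}\int_0^T e^{y\Re g_\Pi(t)}\,dt.
\]

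Since $g_\Pi(t) = f_\Pi(t) - f_\Pi(0)$ with $f_\Pi(0) = \sum_{\omega}a(\omega) \in \R$ by (S2), I factor out $e^{-yf_\Pi(0)}$ and apply the limit formula \eqref{f_205} of Corollary~\ref{cor_2_1} with the test function $\Phi(u) = e^{yu}$. Although $e^{yu}$ is unbounded on $\R$, the quantity $\Re f_\Pi(t)$ stays in the compact interval $[-K,K]$ with $K := \sum_\omega |a(\omega)| < \infty$ by (M1); thus only the restriction of $\Phi$ to this interval enters the left-hand side of \eqref{f_205}, and only its restriction to $\mathrm{supp}\,M_\Pi^\Re \subset [-K,K]$ enters the right-hand side. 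The limit then reads
\[
\lim_{T\to\infty}\frac{1}{T}\int_0^T e^{y\Re f_\Pi(t)}\,dt = \frac{1}{\sqrt{2\pi}}\int_\R M_\Pi^\Re(u)\,e^{yu}\,du = \widetilde{M_\Pi^\Re}(-iy),
\]
which establishes the first equality in \eqref{f_504}.

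For the product representation, on the real axis the definition \eqref{f_204}, the product formula \eqref{f_209}, and Proposition~\ref{prop_2_3} together imply $\widetilde{M_\Pi^\Re}(x) = \prod_\omega J_0(c_\omega x)$ for $x \in \R$, where $c_\omega := |a(\omega)|$. Since $M_\Pi^\Re$ has compact support, $\widetilde{M_\Pi^\Re}(z)$ is entire in $z \in \C$. Using the expansion $J_0(w) = 1 - w^2/4 + O(w^4)$ together with the summability $\sum_\omega c_\omega < \infty$ from (M1), the infinite product $\prod_\omega J_0(c_\omega z)$ converges uniformly on compact subsets of $\C$ and defines an entire function. By analytic continuation from $\R$, the two entire functions coincide on all of $\C$; evaluating at $z = -iy$ and using $J_0(-w) = J_0(w)$ yields the second equality in \eqref{f_504}.

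The main subtle point is the analytic-continuation step just described: one must verify that $\prod_\omega J_0(c_\omega z)$ extends to an entire function of $z$, relying on (M1) and on the quadratic vanishing of $1 - J_0(w)$ at the origin. Apart from this, the proof is a short combination of L\'evy's formula with the limit theorem in Corollary~\ref{cor_2_1}; the hypothesis $\mathrm{LIC}(\Omega)$ enters only implicitly, through its role in establishing Corollary~\ref{cor_2_1}.
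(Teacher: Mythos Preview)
Your proof is correct and follows essentially the same route as the paper's: both apply L\'evy's inversion formula (the paper cites \cite[Theorem 3.10.4]{Du19}) together with the evenness \eqref{f_401} to reduce $\mu_{\Pi,y}(\{0\})$ to the limit $\lim_{T\to\infty}T^{-1}\int_0^T e^{y\Re g_\Pi(t)}\,dt$, factor out $e^{-yf_\Pi(0)}$, invoke \eqref{f_205} with $\Phi(u)=e^{yu}$, and obtain the product by first establishing $\widetilde{M_\Pi^\Re}(x)=\prod_\omega J_0(c_\omega x)$ on $\R$ via \eqref{f_204}, \eqref{f_209}, Proposition~\ref{prop_2_3} and then continuing analytically using the compact support of $M_\Pi^\Re$. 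Your explicit remark that $\Re f_\Pi(t)$ stays in $[-K,K]$ so that only the restriction of $e^{yu}$ matters is a helpful clarification that the paper leaves implicit.
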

\begin{proof} 
The pair $\Pi=(\Omega,a)$ satisfies (M1), (M2), (M3), (S1), and (S2) 
as in the proof of Proposition \ref{prop_502}. 
We have 
\[
\widetilde{M_\Pi}(z) = \int_\C M_\Pi(w) \exp(i\Re(\bar{z}w)) \, |dw| 
= \prod_{\omega \in \Omega} J_0(|z||a(\omega)|) \quad (z \in \C)
\]
from \eqref{f_208}, \eqref{f_209}, and 
Proposition \ref{prop_2_3}. 
Restricting this equation to real $z$, we get
\begin{equation} \label{f_505}
\frac{1}{\sqrt{2\pi}}\int_\R M_\Pi^\Re(u) e^{izu} \, du
= \prod_{\omega \in \Omega} J_0(z|a(\omega)|) \quad (z \in \R)
\end{equation}
by \eqref{f_204}, 
because the power series expansion of $J_0(x)$ at the origin consists of even powers of $x$. 
The left-hand side of \eqref{f_505} is the definition of $\widetilde{M_\Pi^\Re}(z)$ for real $z$ 
and extends to $z \in \C$ by the compactness of the support of $M_\Pi^\Re(u)$ in Corollary \ref{cor_2_1}. 
The right-hand side of \eqref{f_505} also extends to $z \in \C$,   
because $J_0(x)
=1 + O(|x|)$ 
as $|x| \to 0$. Hence, 
\begin{equation} \label{f_506}
\widetilde{M_\Pi^\Re}(z)
= \frac{1}{\sqrt{2\pi}}\int_\R M_\Pi^\Re(u) \exp(izu) \, du
= \prod_{\omega \in \Omega} J_0(z|a(\omega)|)
\end{equation}
holds for $z \in \C$. 

On the other hand, for $x \in \R$, we have 
\begin{equation} \label{f_507}
\aligned 
\mu_{\Pi,y}(\{x\}) 
& = \lim_{T \to \infty} \frac{1}{2T}\int_{-T}^{T} \exp(y\, \Re(g_\Pi(t)))  e^{-ixt} \, dt \\
& = \exp(-y\, f_\Pi(0))\lim_{T \to \infty} \frac{1}{T}\int_{0}^{T} \exp(y\, \Re(f_\Pi(t)))  e^{-ixt} \, dt
\endaligned 
\end{equation}
by \eqref{f_503} and  the inversion formula \cite[Theorem 3.10.4]{Du19}, 
since $\Re(g_\Pi(t))$ is even by \eqref{f_401} 
and equals to $\Re(f_\Pi(t))-f_\Pi(0)$ by \eqref{f_301}, (M3), and (S1). 
If we take $x=0$ in \eqref{f_507}, the right-hand side  is 
\[
\aligned 
\exp(-y\, f_\Pi(0)) & \frac{1}{\sqrt{2\pi}}\int_\R M_\Pi^\Re(u) \exp(i(-iy)u) \, du \\
& = \exp(-y\, f_\Pi(0)) \widetilde{M_\Pi^\Re}(-iy) =\exp(-y\, f_\Pi(0)) \prod_{\omega \in \Omega} J_0(iy|a(\omega)|)
\endaligned 
\]
by \eqref{f_205} for $\Phi(u)=\exp(yu)$ and \eqref{f_506}, since $J_0(x)$ is even. 
Therefore,  we obtain \eqref{f_504}.  
\end{proof}

\section{Explicit formulas for $H(X)$ and $H_\ell(X)$} \label{section_6}

The boundedness of the series $H(X)$ and $H_1(X)$ can be observed 
without using the information about the nontrivial zeros, 
at least numerically, by the following formulas (Proposition \ref{prop_6_1}). 
As seen in the proof, 
they are obtained by just combining classical results \cite[Section 17, (1)]{Da80}, 
\cite[(2.6)]{Gu48}, and \cite[p. 81]{In64}. 
Also, \eqref{f_601} below is essentially the same as Fujii's \eqref{f_103}, which is obtained by integrating formula \eqref{f_605}. Similarly, 
\eqref{f_602} below is essentially a special case of Ihara, Murty, and Shimura \cite[Theorem 1]{IMS09}.
In this sense, these two formulas are not new. 
However, the formula for $H_1(X)$ is extended to the more general cases of $H_\ell(X)$ 
(Propositions \ref{prop_6_2} and \ref{prop_6_3}). 
This provides an alternative proof of \cite[Theorem 1.1 (2)]{Su23}, 
which is used to establish several key results in \cite{Su23}.

\begin{proposition} \label{prop_6_1} 
The following formulas hold unconditionally for $X>1$: 
\begin{equation} \label{f_601}
\aligned 
H(X) 
& =  \frac{1}{2}  \sqrt{X} - \frac{1}{\sqrt{X}} \sum_{n \leq X} 
\Lambda(n)\left( 1 - \frac{n}{X} \right)  \\
& \quad 
- \frac{1}{\sqrt{X}} \log 2\pi - \frac{1}{X\sqrt{X}} \, 12 \zeta'(-1)
 \\
& \quad  
- \frac{1}{2\sqrt{X}} 
\left[
\log(1-X^{-2}) 
+ \frac{1}{X} \log \frac{X+1}{X-1} 
\right] ,
\endaligned 
\end{equation}

\begin{equation} \label{f_602}
\aligned 
H_1(X) 
&=  \sum_{n \leq X}\frac{\Lambda(n)}{\sqrt{n}} 
\left( \sqrt{\frac{X}{n}} - \sqrt{\frac{n}{X}} \right) - 
\sqrt{X} \,(\log X - C_0-1)  \\
& \quad -  \frac{1}{\sqrt{X}} \log 2\pi 
-  \frac{1}{\sqrt{X}} \left[ \frac{1}{2}\log(1-X^{-2}) + \frac{X}{2}\log\frac{X+1}{X-1} -1\right]. 
\endaligned 
\end{equation}
Furthermore, 
formulas \eqref{f_601} and \eqref{f_602} hold for $X=1$ 
in the sense that the right limits of the right-hand sides at $X=1$ 
are equal to the values of the left-hand sides at $X = 1$. 
\end{proposition}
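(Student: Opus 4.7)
My plan is to derive both formulas from classical explicit formulas of Riemann and von Mangoldt, combined with an elementary power-series evaluation of the trivial-zero tail and an elementary definite integral, using no hypothesis beyond standard unconditional facts about $\zeta(s)$.

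For \eqref{f_601} I would invoke the integrated explicit formula of \cite[\S17]{Da80}, obtained by Perron inversion on $-(\zeta'/\zeta)(s)X^{s+1}/(s(s+1))$ and shifting the contour to the left. Collecting residues at $s = 1, \rho, 0, -1, -2n$ (using $(\zeta'/\zeta)(0) = \log 2\pi$ and $(\zeta'/\zeta)(-1) = -12\zeta'(-1)$) yields
\[
\psi_1(X) := \sum_{n \leq X}\Lambda(n)(X-n) = \frac{X^2}{2} - X^{3/2}H(X) - X\log 2\pi - 12\zeta'(-1) - \sum_{n=1}^{\infty}\frac{X^{1-2n}}{2n(2n-1)}.
\]
Solving for $H(X)$, dividing by $X^{3/2}$, and summing the trivial-zero tail by the elementary identity $\sum_{n \geq 1} x^{2n}/(2n(2n-1)) = (x/2)\log\frac{1+x}{1-x} + (1/2)\log(1-x^2)$ at $x = 1/X$ produces \eqref{f_601}.

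For \eqref{f_602} I would first reduce the left sum via Abel summation to
\[
\sum_{n \leq X}\frac{\Lambda(n)}{\sqrt{n}}\left(\sqrt{X/n} - \sqrt{n/X}\right) = \sqrt{X}\int_1^X \frac{\psi(y)}{y^2}\,dy,
\]
substitute Riemann's explicit formula $\psi_0(y) = y - \sum_\rho y^\rho/\rho - \log 2\pi - \frac{1}{2}\log(1-y^{-2})$, and integrate termwise; after integration the sum over zeros converges absolutely and equals
\[
-\sum_\rho \frac{X^{\rho-1} - 1}{\rho(\rho-1)} = X^{-1/2} H_1(X) - K, \qquad K := \sum_\rho \frac{1}{\rho(1-\rho)} = 2 + C_0 - \log(4\pi),
\]
where $K$ is evaluated from the logarithmic derivative at $s=0$ of the Hadamard product for $\xi(s) = \frac{1}{2}s(s-1)\pi^{-s/2}\Gamma(s/2)\zeta(s)$, combined with $(\zeta'/\zeta)(0) = \log 2\pi$ and the Laurent expansion of the digamma function at $0$. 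The remaining integral $\int_1^X \log(1-y^{-2})\,y^{-2}\,dy$ is computed by the substitution $u = 1/y$ and the antiderivative $u\log(1-u^2) - 2u + \log\frac{1+u}{1-u}$. Solving for $H_1(X)$ and simplifying the $\sqrt{X}$-coefficients by the key cancellation $K + \log 2\pi + \log 2 - 1 = 1 + C_0$ delivers \eqref{f_602}.

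Right-continuity at $X = 1$ follows from the explicit cancellations $\log(1-X^{-2}) + X^{-1}\log\frac{1+X^{-1}}{1-X^{-1}} \to 2\log 2$ (for \eqref{f_601}) and $\frac{1}{2}\log(1-X^{-2}) + (X/2)\log\frac{X+1}{X-1} \to \log 2$ (for \eqref{f_602}) as $X \to 1^+$; the resulting right-limits match the absolutely convergent values $H(1)$ and $H_1(1) = K$. The principal technical obstacle is justifying termwise integration of the conditionally convergent zero-sum in Riemann's explicit formula, which I would handle via a truncated version of the explicit formula together with uniform convergence of $\int_1^X y^{\rho-2}/\rho\,dy$ on vertical strips, and the careful bookkeeping of the constant $K$ on which the final simplification depends.
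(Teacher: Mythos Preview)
Your proposal is correct. For \eqref{f_601} your route via the integrated explicit formula for $\psi_1(X)$ is essentially the same as the paper's: the paper splits $H(X)$ by partial fractions into $\sum_\rho X^\rho/\rho$ and $\sum_\rho X^{\rho+1}/(\rho+1)$ and quotes the two classical formulas (Davenport, Guinand), which is just the $\psi_1$-formula unpacked.

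For \eqref{f_602} you take a genuinely different path. The paper again does a partial-fraction split,
\[
H_1(X)=\frac{1}{\sqrt{X}}\sum_\rho\frac{X^\rho}{\rho}-\sqrt{X}\sum_\rho\frac{X^{\rho-1}}{\rho-1},
\]
and then simply quotes the known explicit formula for $\sum_\rho X^{\rho-1}/(\rho-1)$ from Ingham (which already contains the constant $C_0$). This sidesteps both of your obstacles: no termwise integration of a conditionally convergent zero-sum needs to be justified, and the constant $K=\sum_\rho(\rho(1-\rho))^{-1}$ never has to be evaluated separately, since the $C_0$ is built into Ingham's formula. Your approach, by contrast, rederives that formula from Riemann's $\psi_0$ via Abel summation and integration; this is more self-contained and makes transparent the cancellation $K+\log 2\pi+\log 2-1=1+C_0$ that produces the final coefficient, at the cost of the extra bookkeeping and the (standard) justification via a truncated explicit formula. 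Both arguments are valid; the paper's is shorter by outsourcing to the literature, yours is more elementary in the sense of building everything from the single Riemann--von~Mangoldt formula.
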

\begin{proof} We obtain 

\begin{equation} \label{f_603}
H(X) 
= \frac{1}{\sqrt{X}} \sum_{\rho}\frac{X^{\rho}}{\rho} 
- \frac{1}{X\sqrt{X}} \sum_{\rho}\frac{X^{\rho+1}}{\rho+1}
\end{equation}
and
\begin{equation} \label{f_604}
H_1(X)= \frac{1}{\sqrt{X}} \sum_{\rho} \frac{X^{\rho}}{\rho} 
- \sqrt{X}\sum_{\rho} \frac{X^{\rho-1}}{\rho-1}
\end{equation}
by partial fraction decomposition, 
where the sum is understood as 
\[
\displaystyle{\sum_\rho = \lim_{T \to \infty}\sum_{|\Im(\rho)| \leq T}}
\] 
as usual. 
On the other hand, it is known that 
\begin{equation} \label{f_605}
\sum_{\rho}  \frac{X^{\rho}}{\rho}
= X 
- \sideset{}{'}\sum_{n \leq X} \Lambda(n)
- \log 2\pi - \frac{1}{2}\log(1-X^{-2})
\end{equation}
for $X>1$ by \cite[Section 17, (1)]{Da80}, 
\begin{equation} \label{f_606}
\sum_{\rho} \frac{X^{\rho+1}}{\rho+1} 
= \frac{1}{2}X^2
 - \sideset{}{'}\sum_{n \leq X} n\Lambda(n)
+12 \zeta'(-1)
+ \frac{1}{2} \log \frac{X+1}{X-1}
\end{equation}
for $X>1$ by \cite[(2.6)]{Gu48}, 
and 
\begin{equation} \label{f_607}
\sum_{\rho}  \frac{X^{\rho-1}}{\rho-1}
= \log X 
- \sideset{}{'}\sum_{n \leq X}\frac{\Lambda(n)}{n} 
- C_0
- \frac{1}{X} + \frac{1}{2}\log\frac{X+1}{X-1} 
\end{equation}
for $X>1$ by \cite[p. 81]{In64} with the replacements $1/x \mapsto X$ and $\rho \mapsto 1 -\rho$, 
where $\displaystyle{\sideset{}{'}\sum_{n \leq X}a_n}$ means 
$\displaystyle{\sum_{n \leq X}a_n-\frac{1}{2}a_X}$ when $X$ is a prime power 
and $\displaystyle{\sum_{n \leq X}a_n}$ otherwise.  
Applying \eqref{f_605} and \eqref{f_606} to \eqref{f_603}, 
we obtain \eqref{f_601} for $X>1$. 
Applying \eqref{f_605} and \eqref{f_607} to \eqref{f_604}, 
we obtain \eqref{f_602} for $X>1$. 

By definition \eqref{f_101} (resp. \eqref{f_109}), 
the left-hand side of \eqref{f_601} (resp. \eqref{f_602}) is right continuous at $X=1$. 
On the other hand, the right-hand side of \eqref{f_601} (resp. \eqref{f_602}) 
has the right limit at $X=1$, because 
\[
\log(1-X^{-2}) 
+ \frac{1}{X} \log \frac{X+1}{X-1} 
= 2 \log\frac{X+1}{X}+\frac{X-1}{X}\log \frac{X-1}{X+1}
\]
\[
\left(\text{resp.} \quad
\aligned  
\frac{1}{2}\log(1-X^{-2}) & + \frac{X}{2}\log\frac{X+1}{X-1} \\
& =  \frac{X+1}{2}\log(X+1)- \log X-\frac{X-1}{2}\log(X-1) 
\endaligned 
\right).
\]
Therefore, \eqref{f_601} and \eqref{f_602} hold in the sense stated in the proposition. 

Finally, we outline the derivation of \eqref{f_601} from \eqref{f_103}. 
The left-hand side of \eqref{f_103} can be written as 
$\sum_{n \leq X} \Lambda(n)(X-n) - X^2/2$, 
and the right-hand side becomes
\[
\aligned 
- X^{3/2}H(X)
-X \log 2\pi 
-12\zeta'(-1)- X \sum_{n=1}^{\infty} \frac{X^{-2n}}{2n(2n-1)}
\endaligned 
\]
using the well known formulas $\zeta'/\zeta(0)=\log 2\pi$ 
and $\zeta(-1)=-1/12$. 
Here, we note that 
\[
\aligned 
X & \sum_{n=1}^{\infty} \frac{X^{-2n}}{2n(2n-1)}
 = -\sum_{n=1}^{\infty} \frac{X^{-2n+1}}{2n} + \sum_{n=1}^{\infty} \frac{X^{-2n+1}}{2n-1} \\
& = \frac{X}{2} \log(1-X^{-2}) + {\rm arctanh}\, \frac{1}{X} 
= 
\frac{X}{2} 
\left[
\log(1-X^{-2}) 
+ \frac{1}{X} \log \frac{X+1}{X-1}
\right].
\endaligned 
\] 
Thus, \eqref{f_103} can be rewritten as \eqref{f_601}.
\end{proof}
We obtain 
\[
H(1) 
= \sum_\rho \frac{1}{\rho(\rho+1)} 
= \frac{1}{2} - \log 4 \pi -12\zeta'(-1) = -0.045970 \ldots
\]
by taking the limit $X \to 1+0$ on the right side of \eqref{f_601}. 
On the other hand, 
noting the symmetry of nontrivial zeros for $\rho \mapsto 1 -\rho$,
\[
H_1(1) = 2 \sum_{\rho} \frac{1}{\rho} = C_0+2 -\log 4\pi = 0.046191 \ldots
\]
by taking the limit $X \to 1+0$ on the right side of \eqref{f_602}. 
This is a well-known equation found in \cite[Section 12, (10) and (11)]{Da80}, for example.
\medskip

Formula \eqref{f_602} is generalized to $H_\ell(X)$ as follows. 

\begin{proposition} \label{prop_6_2} 
Let $\ell$ be a real number that is not equal to any of 
\[
-2n,  ~0, ~\frac{1}{2}, ~1, ~2n+1 \quad (n \in \Z_{>0}).
\] 
Then the following formula holds unconditionally for $X>1$: 
\begin{equation} \label{f_0315_1}
\aligned 
H_\ell(X)
& = \frac{X^{1/2}}{\ell(\ell-1)}  
-  \sum_{n \leq X}\frac{\Lambda(n)}{\sqrt{n}} 
\frac{1}{1-2\ell} \left[ \left( \frac{X}{n} \right)^{\ell-1/2} - \left( \frac{X}{n} \right)^{-(\ell-1/2)} \right] 
\\
& \quad 
- \frac{1}{1-2\ell} \left[ \frac{\zeta'}{\zeta}(\ell)X^{\ell-1/2} 
- \frac{\zeta'}{\zeta}(1-\ell)X^{-(\ell-1/2)} \right] \\
& \quad 
+ \frac{X^{-1/2}}{2}\cdot 
\frac{X^{-2}}{1-2\ell} \Bigl[ 
\Phi(X^{-2},1,1+\ell/2) \\ 
& \qquad \qquad \qquad \qquad \qquad \qquad - \Phi(X^{-2},1,1+(1-\ell)/2) \Bigr],
\endaligned 
\end{equation}
where $\Phi(z,s,a) = \sum_{n=0}^{\infty} z^n(n+a)^{-s}$ 
is the Hurwitz-Lerch zeta function for $|z|<1$ and $a \not=0, -1,-2,\cdots$. 
Furthermore, formula \eqref{f_0315_1} holds for $X=1$ 
in the sense that the right limit of the right-hand side at $X=1$ 
is equal to the value of the left-hand side at $X = 1$. 
\end{proposition}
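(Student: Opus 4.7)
The plan is to proceed in the spirit of the proof of Proposition \ref{prop_6_1}, by decomposing $H_\ell(X)$ into two simpler sums and evaluating each via an explicit formula of Chebyshev--von Mangoldt type. Since the assumption $\ell \neq 1/2$ guarantees $1-2\ell \neq 0$, partial fractions give
\[
\frac{1}{(\rho-\ell)(1-\rho-\ell)} = \frac{1}{1-2\ell}\left[\frac{1}{\rho-\ell} - \frac{1}{\rho-(1-\ell)}\right],
\]
whence
\[
H_\ell(X) = \frac{X^{\ell-1/2}}{1-2\ell}\sum_{\rho}\frac{X^{\rho-\ell}}{\rho-\ell} \,-\, \frac{X^{-(\ell-1/2)}}{1-2\ell}\sum_{\rho}\frac{X^{\rho-(1-\ell)}}{\rho-(1-\ell)}.
\]
Thus everything reduces to evaluating $\sum_\rho X^{\rho-a}/(\rho-a)$ at $a=\ell$ and at $a=1-\ell$.

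For real $a$ avoiding $0$, $1$, each $-2n$ with $n \in \Z_{>0}$, and any nontrivial zero of $\zeta$, I would derive the explicit formula
\[
\sum_\rho \frac{X^{\rho-a}}{\rho-a}
= \frac{X^{1-a}}{1-a}
- \sideset{}{'}\sum_{n \leq X}\frac{\Lambda(n)}{n^a}
- \frac{\zeta'}{\zeta}(a)
+ \sum_{n=1}^{\infty} \frac{X^{-2n-a}}{2n+a}
\]
valid for $X > 1$. This is obtained by applying Perron's formula to $-\zeta'/\zeta(s+a) = \sum_n \Lambda(n)n^{-a-s}$, shifting the vertical contour $\Re(s)=c$ (with $c$ large) to the far left, and collecting residues at $w:=s+a=1$, at the nontrivial zeros $w=\rho$ (with multiplicity), at the simple pole $w=a$ of $1/(w-a)$, and at the trivial zeros $w=-2n$; the horizontal and far-left integrals vanish by the standard bounds on $\zeta'/\zeta$ along ordinate windows selected between consecutive zeros, exactly as in the classical derivations used for \eqref{f_605}--\eqref{f_607}. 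The hypothesis on $\ell$ in the statement is designed precisely so that both $a=\ell$ and $a=1-\ell$ satisfy all these exclusions, noting that real $\ell$ automatically avoids nontrivial zeros.

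Substituting this formula into the partial-fraction decomposition and collecting like terms, each group simplifies cleanly: the two pole-at-$1$ pieces give $\frac{X^{1/2}}{1-2\ell}(\frac{1}{1-\ell}-\frac{1}{\ell}) = \frac{X^{1/2}}{\ell(\ell-1)}$; the two Dirichlet-coefficient sums combine into the stated $\frac{\Lambda(n)}{\sqrt{n}}\cdot\frac{1}{1-2\ell}[(X/n)^{\ell-1/2} - (X/n)^{-(\ell-1/2)}]$ summand, where the prime on the summation may be dropped because the bracket vanishes at $n=X$; the $\zeta'/\zeta$ pieces combine as displayed; and the trivial-zero sums share the common factor $X^{-2n-1/2}$ and combine as $\frac{X^{-1/2}}{1-2\ell}\sum_{n\geq 1}X^{-2n}[\tfrac{1}{2n+\ell} - \tfrac{1}{2n+1-\ell}]$, which is rewritten using the identity $\sum_{n\geq 1}X^{-2n}/(2n+\alpha) = \frac{X^{-2}}{2}\Phi(X^{-2},1,1+\alpha/2)$ to produce the Hurwitz--Lerch expression in the statement. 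For the right-limit claim at $X=1$, the left-hand side is right-continuous from the uniform convergence of \eqref{f_109} on $[1,2]$; on the right-hand side the arithmetic sum is empty at $X=1$, the elementary and $\zeta'/\zeta$ terms are continuous, and the Lerch difference converges (by Abel's theorem, since the divergent $-\log(1-X^{-2})$ pieces cancel) to $\psi(1+(1-\ell)/2) - \psi(1+\ell/2)$.

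The main obstacle is step two: shifting the contour for a generic real shift $a$ requires uniform control of $-\zeta'/\zeta$ on long horizontal segments selected between consecutive zero ordinates, together with absolute convergence of the resulting trivial-zero residue series for $X>1$. Each ingredient is classical, but the residues must be tracked uniformly as $\ell$ ranges over the admissible set. Once the explicit formula is in hand, the algebraic assembly and the boundary case $X \to 1^+$ are routine.
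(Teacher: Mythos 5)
Your proposal follows essentially the same route as the paper: the identical partial-fraction decomposition of $H_\ell(X)$ into $\sum_\rho X^{\rho-s}/(\rho-s)$ at $s=\ell$ and $s=1-\ell$, the same Guinand-type explicit formula (which the paper simply cites from \cite[(2.6)]{Gu48} and \cite[1.11 (1)]{Er81} rather than rederiving via Perron's formula), the same algebraic assembly including the observation that the primed sum may be deprimed, and the same treatment of the $X\to 1^+$ limit via the cancellation of the logarithmic singularities in the Lerch difference. The argument is correct as proposed.
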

\begin{proof}
We have
\begin{equation} \label{f_608}
\aligned 
H_\ell(X) 
& = \frac{X^{\ell-1/2}}{1-2\ell} 
\sum_\rho \frac{X^{\rho-\ell}}{\rho-\ell} 
- 
\frac{X^{1/2-\ell}}{1-2\ell} 
\sum_\rho \frac{X^{\rho-(1-\ell)}}{\rho-(1-\ell)} 
\endaligned 
\end{equation}
by partial fraction decomposition since $\ell \not=1/2$. 
On the other hand,  
\begin{equation} \label{f_609}
\sum_{\rho} \frac{X^{\rho-s}}{\rho-s} 
= 
 - \sideset{}{'}\sum_{n \leq X} \frac{\Lambda(n)}{n^s} 
+ \frac{X^{1-s}}{1-s} 
-\frac{\zeta'(s)}{\zeta(s)}
+ \frac{1}{2}X^{-s-2} \Phi(X^{-2},1,1+\frac{s}{2})
\end{equation}
for $X>1$, $s\not=1$, $\rho$, $-2n$ ($n \in \Z_{>0}$) by \cite[(2.6)]{Gu48} 
and \cite[1.11 (1)]{Er81}. 
Applying \eqref{f_609} to \eqref{f_608}, 
we get \eqref{f_0315_1} for $X>1$ since $\ell \not=-2n,  0, 1/2, 1, 2n+1$ ($n \in \Z_{>0}$). 

By definition \eqref{f_109}, 
the left-hand side of \eqref{f_0315_1}  is right continuous at $X=1$. 
On the other hand, the right-hand side of \eqref{f_0315_1} 
has the right limit at $X=1$, because 
\[
\aligned 
\Phi(X^{-2},1,1+\ell/2) &- \Phi(X^{-2},1,1+(1-\ell)/2) \\
&= -\left(\ell-\frac{1}{2}\right) \sum_{n=0}^{\infty} \frac{X^{-2n}}{(n+1+\ell/2)(n+1+(1-\ell)/2)}.
\endaligned 
\]
Hence we complete the proof. 
\end{proof}

\begin{proposition} \label{prop_6_3}
The following formula holds unconditionally for $X>1$: 
\begin{equation} \label{f_0315_2}
\aligned 
H_{1/2}(X)
& = -4 \sqrt{X} 
+ \sum_{n \leq X}\frac{\Lambda(n)}{\sqrt{n}} \log \frac{X}{n} 
+ \frac{\zeta'}{\zeta}\left(\frac{1}{2}\right)\log X \\
& \quad + 
\left(\frac{\zeta'}{\zeta}\right)^\prime\left(\frac{1}{2}\right)
- \frac{4}{\sqrt{X}} 
+  
\frac{1}{4\sqrt{X}} \, \Phi(X^{-2},2,1/4).
\endaligned 
\end{equation}
Furthermore, formula \eqref{f_0315_2} holds for $X=1$ 
in the sense that the right limit of the right-hand side at $X=1$ 
is equal to the value of the left-hand side at $X = 1$. 
\end{proposition}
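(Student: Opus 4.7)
The plan is to derive \eqref{f_0315_2} by passing to the limit $\ell\to 1/2$ in formula \eqref{f_0315_1} of Proposition \ref{prop_6_2}. The left-hand side $H_\ell(X)$ depends continuously on $\ell$ in a neighborhood of $1/2$, since the defining series \eqref{f_109} is dominated by $\sum_\rho |a_{H_\ell}(\omega)|$ uniformly for $\ell$ in a small real interval around $1/2$ by (M1)-type estimates; hence $H_\ell(X)\to H_{1/2}(X)$ as $\ell\to 1/2$. So it suffices to show that the right-hand side of \eqref{f_0315_1} tends, term by term, to the right-hand side of \eqref{f_0315_2}.

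Setting $u=\ell-\tfrac12$, each of the factors $(1-2\ell)^{-1}$ is paired with an antisymmetric difference in $u$, which I would evaluate by a Taylor expansion at $u=0$. The elementary term gives $X^{1/2}/(\ell(\ell-1))\to -4\sqrt{X}$. For the von Mangoldt sum, $\tfrac{1}{-2u}\bigl[(X/n)^{u}-(X/n)^{-u}\bigr]=-\sinh(u\log(X/n))/u\to -\log(X/n)$, producing $\sum_{n\le X}\tfrac{\Lambda(n)}{\sqrt n}\log(X/n)$ with the correct sign. For the $\zeta'/\zeta$ term, $F(u):=\tfrac{\zeta'}{\zeta}(\tfrac12+u)X^{u}-\tfrac{\zeta'}{\zeta}(\tfrac12-u)X^{-u}$ vanishes at $u=0$, and L'Hôpital gives
\[
\lim_{u\to 0}\frac{-F(u)}{2u}=-\Bigl(\frac{\zeta'}{\zeta}\Bigr)'(\tfrac12)-\frac{\zeta'}{\zeta}(\tfrac12)\log X,
\]
which after the sign in \eqref{f_0315_1} yields $(\zeta'/\zeta)'(1/2)+(\zeta'/\zeta)(1/2)\log X$. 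For the Lerch piece, using $\partial_a\Phi(z,1,a)=-\Phi(z,2,a)$, the symmetric difference $\Phi(X^{-2},1,\tfrac54+\tfrac u2)-\Phi(X^{-2},1,\tfrac54-\tfrac u2)$ is $-u\,\Phi(X^{-2},2,\tfrac54)+O(u^3)$, so the whole term tends to $\tfrac{1}{4}X^{-5/2}\Phi(X^{-2},2,\tfrac54)$.

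To match the stated form I would then apply the index-shift identity
\[
\Phi(z,2,\tfrac54)=\frac{1}{z}\Bigl[\Phi(z,2,\tfrac14)-16\Bigr],
\]
which is immediate from $\sum_{n\ge 0}z^n/(n+\tfrac54)^2=z^{-1}\sum_{n\ge 1}z^n/(n+\tfrac14)^2$. This converts $\tfrac14 X^{-5/2}\Phi(X^{-2},2,\tfrac54)$ into $\tfrac{1}{4\sqrt X}\Phi(X^{-2},2,\tfrac14)-\tfrac{4}{\sqrt X}$, producing precisely the last two terms of \eqref{f_0315_2}. The boundary statement at $X=1$ follows because $\Phi(z,2,\tfrac14)$ extends continuously to $z=1^{-}$ (the series $\sum 1/(n+\tfrac14)^2$ converges), the logarithmic factor vanishes, and the finite Dirichlet-type sum is empty near $X=1$. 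The main obstacle is the justification of the asymptotic expansions of the Lerch and $\zeta'/\zeta$ terms uniformly in a complex neighborhood of $1/2$ so that the difference quotients converge to the stated derivatives; this reduces to analyticity of both functions near $s=1/2$, which is classical, making the remaining work essentially bookkeeping of signs and the one combinatorial identity above.
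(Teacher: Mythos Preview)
Your proposal is correct and follows essentially the same route as the paper: both proofs take the limit $\ell\to 1/2$ in \eqref{f_0315_1}, evaluate the resulting $0/0$ expressions by l'H\^{o}pital (using $\partial_a\Phi(z,s,a)=-s\,\Phi(z,s+1,a)$ for the Lerch term), and then apply the index-shift identity $X^{-2}\Phi(X^{-2},2,5/4)=\Phi(X^{-2},2,1/4)-16$ to reach the stated form; the $X=1$ boundary case is handled by continuity in both. Your treatment is slightly more explicit about the sign bookkeeping and about the analyticity needed to justify the limits, but there is no substantive difference.
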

\begin{proof}
By taking the limit $\ell \to 1/2$ on the right-hand side of \eqref{f_0315_1},  
\[
\aligned 
H_{1/2}(X)
& = -4 \sqrt{X} 
+ \sum_{n \leq X}\frac{\Lambda(n)}{\sqrt{n}} \log \frac{X}{n} 
+ \frac{\zeta'}{\zeta}\left(\frac{1}{2}\right)\log X + 
\left(\frac{\zeta'}{\zeta}\right)^\prime\left(\frac{1}{2}\right)\\
& \quad 
+\frac{1}{2\sqrt{X}} \lim_{\ell \to 1/2}
\frac{X^{-2}}{1-2\ell} \Bigl[ 
\Phi(X^{-2},1,1+\ell/2) \\ 
& \qquad \qquad \qquad \qquad \qquad \qquad - \Phi(X^{-2},1,1+(1-\ell)/2) \Bigr].
\endaligned 
\]
The second line on the right-hand side is equal to  
\[
\frac{1}{4\sqrt{X}} \cdot X^{-2}\Phi(X^{-2},2,1+1/4)
\] 
by l'H{\^o}pital's rule, because 
\[
\frac{\partial}{\partial a}\Phi(z,s,a) = -s \Phi(z,s+1,a)
\] 
is established by term-by-term differentiation of the series representation 
for $|z|<1$. Further, we have $X^{-2} \Phi(X^{-2},2,1+1/4) = -16 + \Phi(X^{-2},2,1/4)$ 
by applying 
\[
\Phi(z,s,a) = z^k \Phi(z,s,a+k) + \sum_{n=0}^{k-1} \frac{z^n}{(n+a)^s}
\]
(\cite[1.11 (2)]{Er81}) to $z=X^{-2}$, $s=2$, $a=1/4$, and $k=1$. 
Hence, we obtain \eqref{f_0315_2} for $X>1$. 
The right-hand side of \eqref{f_0315_2}  
is right continuous at $X=1$ by \cite[1.11 (3)]{Er81}, 
so the equation holds for $X=1$. 
\end{proof}

Proposition \ref{prop_6_3} gives an alternative proof of \cite[Theorem 1.1 (2)]{Su23}. 
In fact, we obtain 
\[
\aligned 
H_{1/2}(1)-H_{1/2}(e^t)
& = 4(e^{t/2} +e^{-t/2}-2) 
- \sideset{}{'}\sum_{n \leq e^t}\frac{\Lambda(n)}{\sqrt{n}} (t - \log n) \\
& \quad
-  \frac{\zeta'}{\zeta}\left(\frac{1}{2}\right) t
 + \frac{1}{4} \Bigl[ \Phi(1,2,1/4) - e^{-t/2} \Phi(e^{-2t},2,1/4) \Bigr], 
\endaligned 
\]
whose right-hand side coincides with the right-hand side of \cite[(1.1)]{Su23} 
because 
\[
\frac{\xi'}{\xi}\left(\frac{1}{2}\right)=
\frac{\zeta'}{\zeta}\left(\frac{1}{2}\right)
+ \frac{1}{2}\left[ \frac{\Gamma'}{\Gamma}\left(\frac{1}{4}\right) - \log \pi \right] = 0
\]
by the functional equation $\xi(1-s)=\xi(s):=s(s-1)\pi^{-s/2}\Gamma(s/2)\zeta(s)$. 
The left-hand side $H_{1/2}(1)-H_{1/2}(e^t)$ is equal to $\Psi(t)$ of \cite{Su23} 
by definition \eqref{f_109}, \cite[(1.3)]{Su23}, 
and  the symmetry of nontrivial zeros for $\rho \mapsto 1 -\rho$. 
Therefore, \cite[Theorem 1.1 (2)]{Su23} is proved. 

\section{On the equivalence of \eqref{f_102} and \eqref{f_106}}  \label{section_7}

We show the equivalence of \eqref{f_102} and \eqref{f_106} assuming the Riemann hypothesis.  
First, we derive \eqref{f_106} from \eqref{f_102}.
If we write $r_2(n)=\sum_{m+k=n} \Lambda(m)\Lambda(k)$ as in \cite{Fu1, Fu2,Fu3}, 
then 
\[
\aligned 
\sum_{n \leq X}\frac{r_2(n)}{n^2}
& = \frac{1}{X^2}\sum_{n \leq X}r_2(n)
+ 2 \int_{1}^{X} \left(\sum_{n \leq y}r_2(n) \right) y^{-3} \, dy \\
& = \frac{1}{2} - \frac{2}{\sqrt{X}} H(X) +\frac{1}{X^2}R(X) \\
& \quad 
+ 2 \int_{1}^{X} \left(\frac{1}{2}y^{-1}  - 2\, y^{-3/2} H(y) +y^{-3} R(y) \right) \, dy, 
\endaligned 
\]
by partial summation and \eqref{f_102}. 
The middle term of the integral on the right-hand side 
is calculated as 
\[
\int_{1}^{X}  y^{-3/2} H(y) \, dy 
 = 
\sum_\rho \frac{X^{\rho-1}-1}{\rho(\rho+1)(\rho-1)}
\]
by Fubini's theorem. 
For the sum on the right-hand side, we have
\[
\sum_\rho \frac{2X^{\rho-1}}{\rho(\rho+1)(\rho-1)} 
+ \frac{1}{\sqrt{X}}H(X)
= - \frac{1}{\sqrt{X}}H_1(X).
\]
On the other hand, the equality 
\[
\int_{1}^{X} y^{-3} R(y) \, dy
= \int_{1}^{\infty} y^{-3} R(y) \, dy - \int_{X}^{\infty} y^{-3} R(y) \, dy
\]
is justified under $R(y)=O(y^{1+\varepsilon})$, 
which follows from the Riemann hypothesis (\cite{BS10, LZ12}). 
By the above, we obtain
\[
\aligned 
\sum_{n \leq X}\frac{r_2(n)}{n^2}
& = \log X + \left(
\frac{1}{2} 
+\sum_\rho \frac{4}{\rho(\rho+1)(\rho-1)} 
+ 2 \int_{1}^{\infty}  y^{-3} R(y)  \, dy
\right) \\
& \quad 
+ \frac{2}{\sqrt{X}}\,H_1(X) +\frac{1}{X^2}R(X)
- 2 \int_{X}^{\infty}  y^{-3} R(y)  \, dy.
\endaligned 
\]
This gives \eqref{f_106} with 
\[
c_2 = \frac{1}{2} 
+\sum_\rho \frac{4}{\rho(\rho+1)(\rho-1)} 
+ 2 \int_{1}^{\infty}  y^{-3} R(y)  \, dy
\]
and 
\[
E(X) = \frac{1}{X^2}R(X)
- 2 \int_{X}^{\infty}  y^{-3} R(y)  \, dy.
\]
Then, \eqref{f_107} holds under the Riemann hypothesis, 
since $H_1(X)=O(1)$ and $E(X)=O(X^{-1+\varepsilon})$ by $R(y)=O(y^{1+\varepsilon})$.

Next, we derive \eqref{f_102} from \eqref{f_106}. We have 
\[
\aligned 
\sum_{n \leq X} r_2(n)
& = X^2\sum_{n \leq X}\frac{r_2(n)}{n^2}
- 2 \int_{1}^{X} \left(\sum_{n \leq y}\frac{r_2(n)}{n^2} \right) y \, dy \\
& = X^2\log X + c_2X^2 + 2X^{3/2} \, H_1(X) + X^2E(X) \\
& \quad 
- 2 \int_{1}^{X} \left(y\log y + c_2y + 2\sqrt{y} \, H_1(y) + yE(y) \right) \, dy
\endaligned 
\]
by partial summation and \eqref{f_106}. 
The third term of the integral on the right-hand side 
is calculated as 
\[
\aligned 
\int_{1}^{X}  \sqrt{y} H_1(y) \, dy 
&
 = -
\sum_\rho \frac{X^{\rho+1}-1}{\rho(\rho+1)(\rho-1)} 
\endaligned 
\]
by Fubini's theorem. For the sum on the right-hand side, we have
\[
\sum_\rho \frac{2X^{\rho+1}}{\rho(\rho+1)(\rho-1)}+ X^{3/2} \, H_1(X) 
= - X^{3/2}H(X). 
\]
On the other hand, 
\[
2 \int_{1}^{X} (y\log y + c_2y  ) dy = X^2\log X + (X^2-1)\left(c_2-\frac{1}{2}\right). 
\]
Therefore, we obtain
\[
\aligned 
\sum_{n \leq X} r_2(n)
& = \frac{1}{2}X^2  - 2X^{3/2} \, H(X) + X^2E(X) \\
& \quad 
- 2 \int_{1}^{X}  yE(y) \, dy+ c_2-\frac{1}{2}
- \sum_\rho \frac{4}{\rho(\rho+1)(\rho-1)} . 
\endaligned 
\]
This gives \eqref{f_102} with 
\[
R(X) = X^2E(X)
- 2 \int_{1}^{X}  yE(y) \, dy+ c_2-\frac{1}{2} - \sum_\rho \frac{4}{\rho(\rho+1)(\rho-1)} . 
\]
From this, the conjectural estimate $E(X)=O(X^{-1+\varepsilon})$ implies 
$R(X)=O(X^{1+\varepsilon})$, since 
the constants on the right-hand side are absorbed into other terms. 
\bigskip

\noindent
{\bf Acknowledgments}~
The authors thank the referee for careful reading of the manuscript and 
providing valuable comments, 
and Masahiro Mine for several helpful comments, including those on the support of $M$-functions in Theorem~\ref{thm_2_1}.
The first and second authors were supported by JSPS KAKENHI 
Grant Number JP22K03276 and JP23K03050, respectively. 
This work was also supported by the Research Institute for Mathematical Sciences,
an International Joint Usage/Research Center located in Kyoto University.

%

%
\bigskip 

\noindent
Kohji Matsumoto,\\[5pt]
Graduate School of Mathematics \\
Nagoya University \\
Chikusa-ku \\
Nagoya 464-8602, Japan \\[2pt]
and \\[2pt]
Center for General Education \\
Aichi Institute of Technology \\
1247 Yachigusa, Yakusa-cho \\
Toyota 470-0392, Japan  \\[2pt]
Email: {\tt kohjimat@math.nagoya-u.ac.jp}
\bigskip

\noindent
Masatoshi Suzuki,\\[5pt]
Department of Mathematics, 
School of Science, \\
Institute of Science Tokyo \\
2-12-1 Ookayama, Meguro-ku, \\
Tokyo 152-8551, Japan  \\[2pt]
Email: {\tt msuzuki@math.sci.isct.ac.jp}
\end{document}